\providecommand{\U}[1]{\protect\rule{.1in}{.1in}}
\newtheorem{algorithm}[theorem]{Algorithm}
\newtheorem{assumption}[theorem]{Assumption}
\begin{document}

\title{On the Convergence of a Non-linear Ensemble Kalman Smoother}
\author{Elhoucine Bergou\footnotemark[1]
\and Serge Gratton\footnotemark[1]
\and Jan Mandel\footnotemark[2]
}
\maketitle

\begin{abstract}
Ensemble methods, such as the ensemble Kalman filter (EnKF), the local
ensemble transform Kalman filter (LETKF), and the ensemble Kalman smoother
(EnKS) are widely used in sequential data assimilation, where state vectors
are of huge dimension. Little is known, however, about the asymptotic behavior of
ensemble methods. In this paper, 
we prove convergence in $L^{p}$ of ensemble Kalman smoother  to the Kalman smoother in the large-ensemble limit, as well as the convergence
of EnKS-4DVAR, which is a Levenberg-Marquardt-like algorithm with EnKS as the
linear solver, to the classical Levenberg-Marquardt algorithm in which the  
linearized problem is solved exactly.

\end{abstract}

%

\newcommand{\slugmaster}{\slugger{juq}{xxxx}{xx}{x--x}}%


\renewcommand{\thefootnote}{\fnsymbol{footnote}} \footnotetext[1]{INP-ENSEEIHT
and CERFACS, Toulouse, France.} \footnotetext[2]{University of Colorado
Denver, Denver, CO, USA, and Institute of Computer Science, Academy of
Sciences of the Czech Republic, Prague, Czech Republic. Partially supported by
the U.S. National Science Foundation under the grant DMS-1216481, the Czech
Science Foundation under the grant 13-34856S and the Fondation STAE project ADTAO.}

\begin{keywords}
{Levenberg-Marquardt method; Least squares; Kalman filter/smoother; Ensemble Kalman
filter/smoother; $L^{p}$ convergence.}
\end{keywords}

\section{Introduction}

Data assimilation is the process of blending  estimates of a given
system state, in the form of observational information and a prior knowledge
\cite{LeDimet-1986-VAA}. The Kalman filter/smoother (KF/KS)
\cite{Burgers-1998-ASE, Evensen-2009-DAE,Kalnay-2010-EKF} and the three and
four-dimensional variational assimilation system (3DVAR/4DVAR)
\cite{Courtier-1994-SOI, Tremolet-2007-ME4} are among well-known algorithms
used in data assimilation. Kalman filters estimate the state sequentially by
seeking an analysis that minimizes the posterior variance, while the 3DVAR and
4DVAR methods produce posterior maximum likelihood solutions through
minimization of an objective function. For high-dimensional problems, the
ensemble Kalman filter/smoother (EnKF/EnKS)
\cite{Khare-2008-IAE,Evensen-2009-DAE} and their variants have been proposed
as Monte Carlo derivative-free alternatives to the KF and KS, with the
intractable state covariance in the KF or in the KS replaced by the sample
covariance computed from an ensemble of realizations.

The purpose of this paper is to provide theoretical results for the method
originally proposed in \cite{Mandel-2013-4EK}, called EnKS-4DVAR. The
EnKS-4DVAR method uses an ensemble Kalman smoother as a linear solver in the
Gauss-Newton or Levenberg-Marquardt method to minimize the weak-constraint
4DVAR objective function. Further details on implementation and computational
results can be found in \cite{Mandel-2013-4EK}.

The equivalence of the Kalman smoother and incremental variational data
assimilation has been known for a long time; see, e.g.,
\cite{Bell-1993-IKF,Li-2001-OVD}. Hybridization of variational and
ensemble-based methods has been a topic of interest among researchers in
recent years \cite{Hamill-2000-HEK-x, Zupanski-2005-MLE, Wang-2010-IEC,
Sakov-2012-IES, Bocquet-2012-CII, Bocquet-2014-IEK}. The maximum ensemble
likelihood filter (MELF) \cite{Zupanski-2005-MLE} uses repeated EnKF on the
tangent problem to minimize the objective function over the span of the
ensemble. The iterated ensemble Kalman filer (IEnKF) \cite{Sakov-2012-IES}
solves the Euler equations for the minimum by Newton's method, preconditioned
by a square root ensemble Kalman filter, while \cite{Bocquet-2012-CII} adds a
regularization term, similar to the Levenberg-Marquardt method, and
\cite{Bocquet-2014-IEK} extends the IEnK method to strong-constraint 4DVAR.
The IEnKF uses a scaling of the ensemble, called the \textquotedblleft bundle
variant" to approximate the derivatives (tangent operators), achieving a
similar effect as the use of finite differences here. The four-dimensional 
ensemble-based variational data assimilation (4DEnVar) of \cite{Liu-2008-EFV,
Liu-2009-EFV, Liu-2013-EFV} minimizes the 4DVAR objective function over the
span of the ensemble.


Usually, in the formulation of the ensemble based methods (EnKF/EnKS and their
variants), each ensemble member is considered as a vector in $\mathbf{R}^{n}$,
that is, each vector is regarded as a sample point of a random vector. In this
paper,
we investigate a different way to interpret such algorithms,
similarly as in \cite{Mandel-2011-CEK, LeGland-2011-LSA}, namely, each
ensemble member is considered as a random vector and not merely as vector of
$\mathbf{R}^{n}$.
In fact, the elements of the EnKF/EnKS can be seen as random vectors instead of their 
realizations. Surprisingly, in this case little is known about the asymptotic
behavior of the EnKF/EnKS and other related ensemble methods.
 This is in contrast to particle filters, for which the asymptotic behavior as the number of particles increases to infinity is well studied. 
An important question related to EnKF/EnKS and related ensemble methods is a
law of large numbers-type theorem as the size of the ensemble grows to
infinity. In \cite{Mandel-2011-CEK, LeGland-2011-LSA}, it was proved that the
ensemble mean and covariance of EnKF converge to those of the KF, as the
number of ensemble members grows to infinity, but the convergence results are
not dimension independent. The analysis in \cite{Mandel-2011-CEK} relies on
the fact that ensemble members are exchangeable and uses the uniform
integrability theorem, which does not provide convergence rates; in
\cite{LeGland-2011-LSA}, stochastic inequalities for the random matrices and
vectors are used to obtain the classical rate $1/\sqrt{N}$, where $N$ is the
ensemble size, but it relies on entry-by-entry arguments. Convergence in
$L^{p}$ with the rate $1/\sqrt{N}$ independent of dimension
(including infinite) was obtained recently for the square root ensemble Kalman
filter \cite{Kwiatkowski-2014-CSR}. These analyses apply to each time step
separately rather than for the long-time behavior. The EnKF was proved to be
well-posed and to stay within a bounded distance from the truth, for a class
of dynamical systems, with the whole state observed, and when a sufficiently
large covariance inflation is used \cite{Kelly-2014-WAE}.

In this paper, we extend the convergence result of \cite{Mandel-2011-CEK} to
EnKS, and apply the extension to EnKS-4DVAR. 
 The randomness of the elements of
EnKS implies that, in contrast to the EnKS-4DVAR algorithm presented in
\cite{Mandel-2013-4EK}, the coefficients and the solution
of the linearized subproblem at each iteration are random. We investigate also the asymptotic
behavior of this algorithm.
We show the convergence of the EnKS to the KS in $L^{p}$ for all
$p\in[1,\infty)$ in the large ensemble limit, in the sense that the ensemble
mean and covariance constructed by EnKS method converge to the mean and
covariance of the KS respectively in $L^{p}$.
Finally, we show the convergence of the EnKS-4DVAR iterates to their
corresponding iterates in the classical Levenberg-Marquardt algorithm. Since the 
EnKS-4DVAR algorithm uses finite differences for approximating derivatives,
(i) we start by showing the convergence in probability of its iterates to
the iterates generated by the algorithm with exact derivatives as the finite
differences parameter goes to zero, (ii) then we prove the convergence in
$L^{p}$ of its iterates as the size of the ensemble grows to infinity.

The paper is organized as follows: in Section \ref{sec:preliminaries} we
recall some definitions and preliminary results that will be useful throughout the paper. Section \ref{sec:problem} introduces nonlinear data
assimilation. Section \ref{sec:kalman-fltering} contains the statements of the
KF and the EnKF, and recalls the convergence properties of the EnKF as the
ensemble size increases to infinity. Section \ref{sec:KS_EnKS} gives
statements of the KS and the EnKS, and extends the convergence properties of
the EnKF as the ensemble size goes to infinity, to the EnKS. Finally, Section
\ref{sec:vda-4DVAR} recalls the EnKS-4DVAR algorithm and presents the
convergence properties.

\section{Preliminaries}

\label{sec:preliminaries}

We recall definition of sequence of random vectors exchangeability, the notion
of convergence in probability and in $L^{p}$ of random elements. Then we
present several lemmas, which will be useful for the following of the paper.

\begin{definition}
[Exchangeability of random vectors] A set of $N$ random vectors $[X^{1}%
,\ldots,X^{N}]$ is exchangeable if their joint distribution is invariant to a
permutation of the indices; that is, for any permutation $\pi$ of the numbers
$1,\ldots,N$ and any Borel set $B$,
\[
\mathbb{P}\left(  [X^{\pi(1)},\ldots,X^{\pi(N)}]\in B\right)  =\mathbb{P}%
\left(  [X^{1},\ldots,X^{N}]\in B\right)  .
\]

\end{definition}

Clearly, an i.i.d sequence is exchangeable.

If $X$ is a random element (either vector or matrix), we use $|X|$ to denote
the usual Euclidean norm (for vectors) or spectral norm (for a matrix). For
$1\leq p<\infty$, denote
\[
\Vert X\Vert_{p}=E\left(  \left\vert X\right\vert ^{p}\right)  ^{1/p}.
\]
The space $L^{p}$ (of vectors or matrices) consists of all random elements $X$
(with values in the same space) such that the $E\left(  \left\vert
X\right\vert ^{p}\right)  <\infty$. Identifying random elements equal a.s., we
have that $\Vert.\Vert_{p}$ is a norm on the space $L^{p}$. Convergence in
$L^{p}$ is defined as the convergence in this norm. Note that if the element
$X$ is deterministic,%
\[
\Vert X\Vert_{p}=E\left(  \left\vert X\right\vert ^{p}\right)  ^{1/p}=\left(
\left\vert X\right\vert ^{p}\right)  ^{1/p}=\left\vert X\right\vert .
\]

\begin{definition}
[Convergence in probability] A sequence $(X^{k})$ of random vectors converges
in probability towards the random vector $X$ if for all $\epsilon>0$,
\[
\lim_{k\rightarrow\infty}\mathbb{P}\left(  \left\vert X^{k}-X\right\vert
\geq\epsilon\right)  =0,
\]
i.e.,%
\[
\forall\epsilon>0\ \forall\tilde{\epsilon}>0\ \exists k_{0}\ \forall k\geq
k_{0}:\mathbb{P}\left[  \left\vert X^{k}-X\right\vert \leq\epsilon\right]
\geq1-\tilde{\epsilon}.
\]
Convergence in probability will be denoted by%
\[
X^{k}%
\xrightarrow{\mathrm{P}}%
X\text{ as }k\rightarrow\infty.
\]
The concept of convergence in probability and the notation are extended in an
obvious manner to the case when the random vectors are indexed by $\tau>0$.
Then%
\[
X^{\tau}%
\xrightarrow{\mathrm{P}}%
X\text{ as }\tau\rightarrow0
\]
means%
\[
\forall\varepsilon>0\ \forall\tilde{\varepsilon}>0\ \exists\tau_{0}%
>0\ \forall0<\tau<\tau_{0}:\mathbb{P}\left[  \left\vert X^{\tau}-X\right\vert
\leq\varepsilon\right]  \geq1-\tilde{\varepsilon}.
\]

\end{definition}

We state the following lemmas, which will be used in this paper.

\begin{lemma}
\label{lem:exchangeability_sum} If random elements $Y^{1},\ldots,Y^{N}$ are
exchangeable, and $Z^{1},\ldots,Z^{N}$ are also exchangeable, and independent
from $Y^{1},\ldots,Y^{N}$, then $Y^{1}+Z^{1},\ldots,Y^{N}+Z^{N}$ are exchangeable.
\end{lemma}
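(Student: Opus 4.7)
The plan is to unwind the definition of exchangeability and reduce the claim about the sums $Y^i + Z^i$ to a statement about the joint law of the $2N$-tuple $(Y^1,\ldots,Y^N,Z^1,\ldots,Z^N)$. Fix an arbitrary permutation $\pi$ of $\{1,\ldots,N\}$ and a Borel set $B$ in the appropriate product space. I want to show
\[
\mathbb{P}\bigl([Y^{\pi(1)}+Z^{\pi(1)},\ldots,Y^{\pi(N)}+Z^{\pi(N)}]\in B\bigr)
=\mathbb{P}\bigl([Y^{1}+Z^{1},\ldots,Y^{N}+Z^{N}]\in B\bigr).
\]

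First I would establish the stronger statement that the full joint distribution of $(Y^{\pi(1)},\ldots,Y^{\pi(N)},Z^{\pi(1)},\ldots,Z^{\pi(N)})$ equals that of $(Y^{1},\ldots,Y^{N},Z^{1},\ldots,Z^{N})$. By independence of the $Y$-tuple and the $Z$-tuple, the joint law factors as a product measure; applying the same permutation $\pi$ to both blocks preserves independence, so the permuted joint law is the product of the laws of $(Y^{\pi(1)},\ldots,Y^{\pi(N)})$ and $(Z^{\pi(1)},\ldots,Z^{\pi(N)})$. By the exchangeability hypothesis applied separately to each tuple, these two marginal laws coincide with those of $(Y^{1},\ldots,Y^{N})$ and $(Z^{1},\ldots,Z^{N})$, and the equality of joint laws follows from uniqueness of the product measure.

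Next I would push this equality forward through the continuous (hence Borel measurable) componentwise addition map $(y_{1},\ldots,y_{N},z_{1},\ldots,z_{N})\mapsto(y_{1}+z_{1},\ldots,y_{N}+z_{N})$. Since the preimage of $B$ under this map is a Borel set, applying both joint distributions to this preimage yields the desired equality of probabilities, for every $\pi$ and every $B$; this is precisely exchangeability of $(Y^{1}+Z^{1},\ldots,Y^{N}+Z^{N})$.

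There is no real obstacle here; the only subtlety worth flagging is that one must permute the $Y$'s and the $Z$'s by the \emph{same} permutation $\pi$ (rather than treating them as $2N$ separately exchangeable elements), which is exactly what the pairwise sum $Y^{i}+Z^{i}$ requires. The independence hypothesis is precisely what makes this simultaneous permutation harmless on the product law.
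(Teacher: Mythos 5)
Your proof is correct: the key observation that independence makes the joint law of the $2N$-tuple a product measure, so that applying the \emph{same} permutation to both blocks preserves the law, followed by pushing forward through the measurable componentwise-addition map, is exactly the standard argument. The paper itself does not prove this lemma (it defers to the cited reference \cite{Mandel-2011-CEK}), and your argument is the expected one, with the one genuine subtlety --- simultaneous permutation of both blocks rather than independent permutations --- correctly identified and handled.
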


\begin{lemma}
\label{lem:exchangeability_F} If random elements $Y^{1},\ldots,Y^{N}$ are
exchangeable, and
\[
Z^{k}=F\left(  Y^{1},\ldots,Y^{N},Y^{k}\right)  ,
\]
where $F$ is measurable and permutation invariant in the first $N$ arguments,
then $Z^{1},\ldots,Z^{N}$ are also exchangeable.
\end{lemma}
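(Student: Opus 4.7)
The plan is to show directly that for every permutation $\pi$ of $\{1,\ldots,N\}$, the random vectors $(Z^{\pi(1)},\ldots,Z^{\pi(N)})$ and $(Z^{1},\ldots,Z^{N})$ have the same joint distribution, by rewriting both as the same measurable function applied to two identically distributed inputs.

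First I would fix an arbitrary permutation $\pi$ and, using the hypothesis that $F$ is permutation invariant in its first $N$ arguments, replace $F(Y^{1},\ldots,Y^{N},Y^{\pi(k)})$ by $F(Y^{\pi(1)},\ldots,Y^{\pi(N)},Y^{\pi(k)})$. In other words,
\[
Z^{\pi(k)} \;=\; F(Y^{1},\ldots,Y^{N},Y^{\pi(k)}) \;=\; F(Y^{\pi(1)},\ldots,Y^{\pi(N)},Y^{\pi(k)}),
\]
so that the ``first $N$ slots'' and the ``$(N{+}1)$th slot'' of $F$ are simultaneously permuted by the same $\pi$.

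Next I would introduce the single measurable map
\[
G(y^{1},\ldots,y^{N}) \;=\; \bigl(F(y^{1},\ldots,y^{N},y^{1}),\,\ldots,\,F(y^{1},\ldots,y^{N},y^{N})\bigr),
\]
so that by the previous step
\[
(Z^{1},\ldots,Z^{N}) = G(Y^{1},\ldots,Y^{N}), \qquad (Z^{\pi(1)},\ldots,Z^{\pi(N)}) = G(Y^{\pi(1)},\ldots,Y^{\pi(N)}).
\]
Since $(Y^{1},\ldots,Y^{N})$ and $(Y^{\pi(1)},\ldots,Y^{\pi(N)})$ are equidistributed by the exchangeability hypothesis, and $G$ is a fixed measurable (vector-valued) function, pushing forward by $G$ gives equality of the distributions of $(Z^{1},\ldots,Z^{N})$ and $(Z^{\pi(1)},\ldots,Z^{\pi(N)})$ on every Borel set $B$, which is exactly the definition of exchangeability.

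There is no real obstacle here; the only point requiring a bit of care is the bookkeeping between the two roles of the index: the ``generic slot'' variables $Y^{1},\ldots,Y^{N}$ (permuted by invariance of $F$) and the ``distinguished'' variable $Y^{k}$ that becomes $Y^{\pi(k)}$ (permuted by the action on the tuple of $Z$'s). The key observation is that both permutations are the same $\pi$, which is what allows the argument to be packaged into a single map $G$. Measurability of $G$ follows from the measurability of $F$ and standard composition/projection arguments, so no additional regularity assumption is needed.
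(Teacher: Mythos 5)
Your proof is correct and is the canonical argument; note that the paper itself does not prove this lemma but defers to the cited reference, where essentially this same push-forward argument (a single measurable map applied to two equidistributed inputs) is used. The one step that needs care --- applying the \emph{same} permutation $\pi$ simultaneously to the invariant first $N$ slots and to the distinguished last slot so that everything factors through the single map $G$ --- is handled correctly.
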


For the proof of the previous two lemmas, we refer to \cite{Mandel-2011-CEK}.

\begin{lemma}
[Uniform integrability] If $(X^{k})$ is a bounded sequence in $L^{p}$ and
$X^{k}%
\xrightarrow{\mathrm{P}}%
X$, then $\Vert X^{k}-X\Vert_{q}\rightarrow0$ for all $1\leq q<p$.
\end{lemma}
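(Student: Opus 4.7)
The plan is to combine the hypothesized convergence in probability with a uniform $L^p$ bound via H\"older's inequality, in the spirit of the classical Vitali convergence theorem.

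First, I would verify that the limit $X$ itself lies in $L^p$. Since $X^k\xrightarrow{\mathrm{P}} X$, a standard subsequence argument produces a subsequence $X^{k_j}\to X$ almost surely, and Fatou's lemma then yields
\[
E|X|^p \;\le\; \liminf_j E|X^{k_j}|^p \;\le\; \sup_k \|X^k\|_p^p \;<\; \infty.
\]
By the triangle inequality in $L^p$, the differences $Y^k := X^k - X$ therefore satisfy the uniform bound $\sup_k \|Y^k\|_p =: C < \infty$, and in particular $Y^k\xrightarrow{\mathrm{P}} 0$.

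Next, fix $\epsilon>0$ and decompose
\[
E|Y^k|^q \;=\; E\!\left[|Y^k|^q \mathbf{1}_{\{|Y^k|\le\epsilon\}}\right] + E\!\left[|Y^k|^q \mathbf{1}_{\{|Y^k|>\epsilon\}}\right] \;\le\; \epsilon^q + E\!\left[|Y^k|^q \mathbf{1}_{\{|Y^k|>\epsilon\}}\right].
\]
I would control the tail term by H\"older's inequality with conjugate exponents $p/q$ and $p/(p-q)$ --- this is the step where the strict inequality $q<p$ is essential --- to obtain
\[
E\!\left[|Y^k|^q \mathbf{1}_{\{|Y^k|>\epsilon\}}\right] \;\le\; \bigl(E|Y^k|^p\bigr)^{q/p}\,\mathbb{P}(|Y^k|>\epsilon)^{(p-q)/p} \;\le\; C^q\,\mathbb{P}(|Y^k|>\epsilon)^{(p-q)/p}.
\]
Since the right-hand side tends to zero as $k\to\infty$ by convergence in probability, we get $\limsup_k E|Y^k|^q \le \epsilon^q$, and since $\epsilon>0$ is arbitrary we conclude $\|Y^k\|_q\to 0$, as required.

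I do not foresee a serious obstacle: all three ingredients --- extraction of an almost-sure subsequence to access Fatou, the decomposition of the $L^q$-integral at level $\epsilon$, and H\"older with the sharp exponent split --- are entirely standard. The one point that deserves a line in the write-up is that the argument genuinely breaks at $q=p$, because the H\"older exponent $p/(p-q)$ then becomes infinite; this explains why the strict inequality $q<p$ is indispensable in the hypothesis.
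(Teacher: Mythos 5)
Your proof is correct, and every step checks out: the Fatou argument along an almost-sure subsequence to place $X$ in $L^{p}$, the truncation of $E|Y^{k}|^{q}$ at level $\epsilon$, and the H\"older step with exponents $p/q$ and $p/(p-q)$ are all applied properly, and the conclusion $\limsup_{k}E|Y^{k}|^{q}\le\epsilon^{q}$ for every $\epsilon>0$ does finish the argument. The paper takes a shorter route to the same destination: it observes that $\bigl(E|X^{k}-X|^{q(p/q)}\bigr)$ is bounded with $p/q>1$, concludes that $\bigl(|X^{k}-X|^{q}\bigr)$ is uniformly integrable, and then invokes the standard theorem (cited from Billingsley) that a uniformly integrable sequence converging to $0$ in probability converges to $0$ in $L^{1}$. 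Your $\epsilon$-decomposition plus H\"older is precisely the textbook proof of that black-boxed theorem in the special case at hand, so the underlying mechanism is identical; what your version buys is self-containedness, an explicit identification of where $q<p$ is used (the conjugate exponent $p/(p-q)$ blowing up at $q=p$), and --- a genuine small improvement --- an explicit verification that $X\in L^{p}$, which the paper's one-line proof tacitly assumes when it asserts boundedness of $E|X^{k}-X|^{p}$. What the paper's version buys is brevity and reuse of a named result the reader already knows.
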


\begin{proof}
The proof is an exercise on uniform integrability \cite[page 338]
{Billingsley-1995-PM}: Let $1\leq q<p$. The sequence $\left(  E\left(
\left\vert X^{k}-X\right\vert ^{q\left(  p/q\right)  }\right)  \right)  $ is
bounded and $p/q>1$, thus the sequence $\left(  \left\vert X^{k}-X\right\vert
^{q}\right)  $ is uniformly integrable. Since $\left\vert X^{k}-X\right\vert
\xrightarrow{\mathrm{P}}%
0$, and thus $\left\vert X^{k}-X\right\vert ^{q}%
\xrightarrow{\mathrm{P}}%
0$, it follows that $E\left(  \left\vert X^{k}-X\right\vert ^{q}\right)
\rightarrow0$.
\end{proof}

\begin{lemma}
[Continuous mapping theorem] Let $X^{k}$ be a sequence of random elements with
values on a metric space $\mathcal{A}$, such that $X^{k}%
\xrightarrow{\mathrm{P}}%
X$. Let $f$ be a continuous function from $\mathcal{A}$ to another metric
space $\mathcal{B}$. Then $f(X^{k})%
\xrightarrow{\mathrm{P}}%
f(X)$.
\end{lemma}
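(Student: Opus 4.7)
The plan is to rely on the standard subsequence principle: a sequence $Y^{k}$ of random elements in a metric space converges in probability to $Y$ if and only if every subsequence $(Y^{k_{j}})$ admits a further subsequence that converges to $Y$ almost surely. Granting this equivalence, the theorem reduces to the observation that almost sure convergence is automatically preserved by a continuous map, since pointwise continuity gives $f(y^{k})\to f(y)$ whenever $y^{k}\to y$ in $\mathcal{A}$.

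First I would establish (or invoke) the extraction of an a.s.~convergent subsequence from convergence in probability. The argument is the familiar Borel--Cantelli one: pick indices $k_{j}$ so that
\[
\mathbb{P}\bigl(d_{\mathcal{A}}(X^{k_{j}},X)>2^{-j}\bigr)<2^{-j},
\]
apply Borel--Cantelli to conclude that $d_{\mathcal{A}}(X^{k_{j}},X)\le 2^{-j}$ for all sufficiently large $j$ almost surely, and deduce $X^{k_{j}}\to X$ a.s. Applied to any subsequence of the original $(X^{k})$, this recipe produces a further a.s.\ convergent sub-subsequence.

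Next, on the almost sure event $\{X^{k_{j_{l}}}\to X\}$, continuity of $f$ at the random point $X(\omega)$ yields $f(X^{k_{j_{l}}})\to f(X)$ in $\mathcal{B}$. Almost sure convergence implies convergence in probability, so $f(X^{k_{j_{l}}})\xrightarrow{\mathrm{P}}f(X)$. Since we have shown that every subsequence of $(f(X^{k}))$ admits a further subsequence converging to $f(X)$ in probability, the reverse direction of the subsequence principle gives the desired conclusion $f(X^{k})\xrightarrow{\mathrm{P}}f(X)$.

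The main concern to address is that $\mathcal{A}$ and $\mathcal{B}$ are only assumed to be metric spaces, with no separability, completeness, or local compactness hypothesis, which rules out coordinate-wise or tightness-based shortcuts. However, both ingredients above, namely the Borel--Cantelli extraction of an a.s.\ convergent subsequence and the pointwise use of continuity, work verbatim in an arbitrary metric space, so the argument goes through without additional structural assumptions on $\mathcal{A}$ or $\mathcal{B}$.
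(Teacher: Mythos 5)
Your argument is correct, but it takes a different route from the one in the reference the paper cites (the paper itself gives no proof, only a pointer to van der Vaart, Theorem 2.3). The standard direct proof there fixes $\varepsilon,\delta>0$, introduces the ``bad set'' $B_{\delta}=\{x:\exists y,\ d_{\mathcal{A}}(x,y)<\delta \text{ and } d_{\mathcal{B}}(f(x),f(y))\geq\varepsilon\}$, bounds $\mathbb{P}[d_{\mathcal{B}}(f(X^{k}),f(X))\geq\varepsilon]\leq\mathbb{P}[X\in B_{\delta}]+\mathbb{P}[d_{\mathcal{A}}(X^{k},X)\geq\delta]$, and lets $\delta\downarrow 0$ using the fact that $\bigcap_{\delta>0}B_{\delta}$ is contained in the discontinuity set of $f$, which is empty here. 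Your subsequence--plus--Borel--Cantelli argument trades that $\varepsilon$-$\delta$ bookkeeping for the equivalence ``convergence in probability $\Leftrightarrow$ every subsequence has a further a.s.\ convergent subsequence,'' and then exploits the trivial preservation of a.s.\ convergence under continuous maps; each step you list (the extraction along a subsequence with $\sum_{j}2^{-j}<\infty$, the pointwise use of continuity, the reverse direction applied to the real sequence $\mathbb{P}[d_{\mathcal{B}}(f(X^{k}),f(X))\geq\varepsilon]$) is sound. What the direct argument buys is that it extends verbatim to $f$ continuous only at $\mathbb{P}_{X}$-almost every point, and it avoids invoking the subsequence principle as a black box; what yours buys is brevity and no set-theoretic manipulation. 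The one caveat common to both proofs, which you do not mention, is measurability: for the conclusion to be well posed one needs $\omega\mapsto d_{\mathcal{B}}(f(X^{k}(\omega)),f(X(\omega)))$ to be measurable, which can fail for non-separable $\mathcal{B}$ (this is why the cited reference works with outer probability). In this paper all spaces are finite-dimensional Euclidean, so the point is immaterial, but it is worth a sentence if you want the lemma at the stated level of generality.
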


We refer to \cite[Theorem 2.3]{vanderVaart-2000-AS} for a proof.

\section{The nonlinear data assimilation problem}


\label{sec:problem}%

\begin{table}[tb] \centering
\begin{tabular}
[c]{llll}%
Symbol & Random & Meaning \hspace{6.65cm} First used in & \hspace{-0.48cm}
Sec.\\\hline
$X_{i}$ & yes & the state at time $i$ & \ref{sec:problem}\\
$X_{i|\ell}$ & no & the mean of $X_{i}$ given data $y_{1:\ell}$ &
\ref{sec:filter}\\
$P_{i|\ell}$ & no & the covariance of $X_{i}$ given data $y_{1:\ell}$ &
\ref{sec:filter}\\
$X_{i|\ell}^{n}$ & yes & member $n$ of an ensemble approximating $X_{i}$ given
$y_{1:\ell}$ & \ref{sec:EnKF}\\
$\bar{X}_{i|\ell}^{N}$ & yes & the sample mean of the ensemble $X_{i|\ell}%
^{1},\ldots,X_{i|\ell}^{N}$ & \ref{sec:EnKF}\\
${P}_{i|\ell}^{N}$ & yes & the sample covariance of the ensemble $X_{i|\ell
}^{1},\ldots,X_{i|\ell}^{N}$ & \ref{sec:EnKF}\\
$U_{i|\ell}^{n}$ & yes & member $n$ of a reference ensemble approximating
$X_{i}$ given $y_{1:\ell}$ & \ref{sec:EnKF-convergence}\\
$X_{0:i}$ & yes & composite state $\left[  X_{0},\ldots,X_{i}\right]  $ at
times $0,\ldots,i$ & \ref{sec:smoother}\\
$X_{0:i|\ell}$ & no & the mean of $X_{0:i}$ & \ref{sec:smoother}\\
${P}_{0:i|\ell}$ & no & the covariance of $X_{0:i}$ & \ref{sec:smoother}\\
$X_{0:i|\ell}^{n}$ & yes & member $n$ of ensemble approximating $X_{i}$ given
$y_{1:\ell}$ & \ref{sec:EnKS}\\
$\bar{X}_{0:i|\ell}^{N}$ & yes & the sample mean of the ensemble $X_{0:i|\ell
}^{1},\ldots,X_{0:i|\ell}^{N}$ & \ref{sec:EnKS}\\
${P}_{0:i,0:i|\ell}^{N}$ & yes & the sample covariance of the ensemble
$X_{0:i|\ell}^{1},\ldots,X_{0:i|\ell}^{N}$ & \ref{sec:EnKS}\\
$U_{0:i|\ell}^{n}$ & yes & member $n$ of reference ensemble approximating
$X_{i}$ given $y_{1:\ell}$ & \ref{sec:EnKS_convergence}\\
$x_{\mathrm{b}}$ & no & the background state & \ref{sec:4DVAR}\\
$x_{i}$ & no & the unknown state in 4DVAR minimization & \ref{sec:4DVAR}\\
$x_{0:k}$ & no & the unknown composite state in the 4DVAR minimization &
\ref{sec:4DVAR}\\
$x_{i}^{j},x_{0:k}^{j}$ & no & the iterate $j$ in the 4DVAR minimization &
\ref{sec:incremental-4DVAR}\\
$X_{0:i|\ell}^{j,n}$ & yes & member $n$ of ensemble approximating $x_{0:i}%
^{j}$ & \ref{sec:EnKS-4DVAR-WD}\\
$\bar{X}_{0:i|\ell}^{j,N}$ & yes & the sample mean of the ensemble
$X_{0:i|\ell}^{j,1},\ldots,X_{0:i|\ell}^{j,N}$ using & \ref{sec:EnKS-4DVAR-WD}%
\\
$X_{0:k|k}^{j,n,N_{j}}$ & yes & member $n$ from ensemble of size $N_{j}$ &
\ref{sec:EnKS-4DVAR-WD}\\
$X_{0:i|\ell}^{j,n,\tau}$ & yes & member $n$ of the ensemble approximating
$x_{0:i}^{j}$ with step $\tau$ & \ref{sec:EnKS-4DVAR-fd}\\
$\bar{X}_{0:i|\ell}^{j,n,\tau}$ & yes & the sample mean of the ensemble
$X_{0:i|\ell}^{j,1,\tau},\ldots,X_{0:i|\ell}^{j,N,\tau}$ &
\ref{sec:EnKS-4DVAR-fd}\\
$\Delta_{i|\ell}^{j,n}$ & yes & 4DVAR increment ensemble members $X_{i|\ell
}^{j,n}-x_{i}^{j-1,n}$ & \ref{sec:EnKS-4DVAR-fd}%
\end{tabular}
\caption{Notation for state vectors.}\label{tab:notation}%
\end{table}%

Consider the following classical system of stochastic equations with additive
Gaussian noise, which appears in different fields, such as weather forecasting
and hydrology,%
\begin{align}
X_{0}  &  \sim N(x_{\mathrm{b}},B)\label{eq:filter_background_eq}\\
X_{i}  &  =\mathcal{M}_{i}(X_{i-1})+\mu_{i}+V_{i},\quad V_{i}\sim
N(0,Q_{i}),\quad i=1,\ldots k\label{eq:filter_model_eq}\\
y_{i}  &  =\mathcal{H}_{i}(X_{i})+W_{i},\quad W_{i}\sim N(0,R_{i}),\quad
i=1,\ldots k, \label{eq:filter_observation_eq}%
\end{align}
with independent perturbations $V_{i}$ and $W_{i}$. The operators
$\mathcal{M}_{i}$ and $\mathcal{H}_{i}$ are the model operators and the
observation operators, respectively, and they are assumed to be continuously
differentiable. When they are linear, we denote them by $M_{i}$ and $H_{i}$,
respectively. The index $i$ denotes the time index and $k$ denotes the number
of time steps. While the outputs $y_{i}$ are observed, the state $X_{i}$ and
the noise variables $V_{i}$ and $W_{i}$ are hidden.
 The quantities $B$, $Q_i$ and $R_i$ are the covariance matrices of $X_0$, $V_i$ and $W_i$ respectively. The quantity $\mu_{i}$
 is a deterministic vector. 
 The objective is to
estimate the hidden states $X_{1},\ldots,X_{k}$.

\begin{definition}
The distribution of $X_{k}$ from (\ref{eq:filter_background_eq}%
)--(\ref{eq:filter_observation_eq}) conditioned on $y_{1},\ldots,y_{k-1}$ is
called prior distribution. The filtering, or posterior, distribution is the
distribution of $X_{k}$, conditioned on the observations of the data
$y_{1},\ldots y_{k}$. The smoothing distribution is the joint distribution of
$X_{0},\ldots,X_{k}$, conditioned on the observations of data $y_{1},\ldots
y_{k}$.
\end{definition}

In geosciences, the prior is usually called forecast and the posterior is
called analysis. In Table \ref{tab:notation}, we collect the notation for
state vectors and their ensembles for reference.


\section{Kalman filtering}

\label{sec:kalman-fltering} \nopagebreak

\subsection{Kalman filter}

\label{sec:filter}

The Kalman filter \cite{Kalman-1960-NAL} provides an efficient computational
recursive means to estimate the state of the process $X_{k}$ in the linear
case, i.e., when $\mathcal{M}_{i}$ and $\mathcal{H}_{i}$, $i=1,\ldots,k,$ are
linear. Denote the mean and the covariance of $X_{i}$ given the data
$y_{1},\ldots,y_{\ell}$, by
\[
X_{i|\ell}=E(X_{i}|y_{1},\ldots,y_{\ell}),\quad P_{i|\ell}=P(X_{i}%
|y_{1},\ldots,y_{\ell}),
\]
respectively. In the linear case, the probability distribution of the process
$X_{k}$ given the data up to the time $k$ is Gaussian, therefore it is
characterized by its mean and covariance matrix, which can be computed as follows.

\begin{algorithm}
[Kalman filter]\label{alg:KF} For $i=0$, set $X_{0|0}=x_{\mathrm{b}}$ and
$P_{0|0}=B$. For $i=1,\ldots,k,$%
\begin{align}
X_{i|i-1}  &  =M_{i}X_{i-1|i-1}+\mu_{i},\text{ (advance the mean in
time)}\label{eq:Kalman-model}\\
P_{i|i-1}  &  =M_{i}P_{i-1|i-1}M_{i}^{\mathrm{T}}+Q_{i},\text{ (advance the
covariance in time)}\nonumber\\
K_{i}  &  =P_{i|i-1}H_{i}^{\mathrm{T}}(H_{i}P_{i|i-1}H_{i}^{\mathrm{T}}%
+R_{i})^{-1}\text{ (the Kalman gain)}\nonumber\\
X_{i|i}  &  =X_{i|i-1}+K_{i}(y_{i}-H_{i}X_{i|i-1}),\text{ (update the mean
from the observation }i\text{)}\label{eq:Kalman-update-mean}\\
P_{i|i}  &  =(I-K_{i}H_{i})P_{i|i-1}\text{ (update the covariance from the
observation }i\text{)} \label{eq:Kalman-update-covariance}%
\end{align}

\end{algorithm}

In atmospheric sciences, the update (\ref{eq:Kalman-update-mean}%
)--(\ref{eq:Kalman-update-covariance}) is referred to as the analysis step.

\begin{lemma}
\label{lem:KF}The distribution $N\left(  X_{k|k},P_{k|k}\right)  $ from the
Kalman filter is the filtering distribution.

\end{lemma}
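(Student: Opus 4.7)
The plan is to prove this classical statement by induction on the time index $i$, showing at each stage that the Gaussian $N(X_{i|i},P_{i|i})$ produced by Algorithm~\ref{alg:KF} equals the conditional law of $X_{i}$ given $y_{1},\ldots,y_{i}$. The base case $i=0$ is immediate from (\ref{eq:filter_background_eq}): the algorithm initializes with $X_{0|0}=x_{\mathrm{b}}$ and $P_{0|0}=B$, which coincide with the mean and covariance of the prior $X_{0}\sim N(x_{\mathrm{b}},B)$.

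For the inductive step, I would assume $X_{i-1}\mid y_{1},\ldots,y_{i-1}\sim N(X_{i-1|i-1},P_{i-1|i-1})$ and split into the forecast and analysis substeps. For the forecast, since $V_{i}\sim N(0,Q_{i})$ is independent of $X_{i-1}$ and of $y_{1},\ldots,y_{i-1}$, the conditional law of $X_{i}=M_{i}X_{i-1}+\mu_{i}+V_{i}$ given $y_{1},\ldots,y_{i-1}$ is Gaussian, with mean $M_{i}X_{i-1|i-1}+\mu_{i}=X_{i|i-1}$ and covariance $M_{i}P_{i-1|i-1}M_{i}^{\mathrm{T}}+Q_{i}=P_{i|i-1}$, matching the recursion.

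For the analysis step, I would examine the joint law of $(X_{i},y_{i})$ conditional on $y_{1},\ldots,y_{i-1}$. Because $y_{i}=H_{i}X_{i}+W_{i}$ with $W_{i}\sim N(0,R_{i})$ independent of everything previous, this joint conditional law is Gaussian with cross-covariance $P_{i|i-1}H_{i}^{\mathrm{T}}$ and with $y_{i}$-marginal covariance $H_{i}P_{i|i-1}H_{i}^{\mathrm{T}}+R_{i}$. Applying the standard conditional-Gaussian formula then conditions on the observed value $y_{i}$ and produces posterior mean $X_{i|i-1}+K_{i}(y_{i}-H_{i}X_{i|i-1})$ and posterior covariance $P_{i|i-1}-P_{i|i-1}H_{i}^{\mathrm{T}}(H_{i}P_{i|i-1}H_{i}^{\mathrm{T}}+R_{i})^{-1}H_{i}P_{i|i-1}=(I-K_{i}H_{i})P_{i|i-1}$, which are precisely (\ref{eq:Kalman-update-mean})--(\ref{eq:Kalman-update-covariance}).

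The only nontrivial point is justifying invertibility of $H_{i}P_{i|i-1}H_{i}^{\mathrm{T}}+R_{i}$ so that the Kalman gain $K_{i}$ is well defined; this follows from the assumption that $R_{i}$ is positive definite (as the covariance of a nondegenerate Gaussian noise), since then $H_{i}P_{i|i-1}H_{i}^{\mathrm{T}}+R_{i}\succeq R_{i}\succ 0$. Beyond that, the argument is entirely a bookkeeping exercise built on the multivariate Gaussian conditioning identity, which I would cite rather than re-derive; since the result is classical, a brief proof sketch together with a pointer to a standard reference such as Kalman's original paper already cited in the introduction suffices.
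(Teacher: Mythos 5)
Your proof is correct: the induction over time steps, with the forecast step handled by linearity and independence of $V_i$ and the analysis step handled by the multivariate Gaussian conditioning identity (plus the remark that $H_iP_{i|i-1}H_i^{\mathrm{T}}+R_i \succeq R_i \succ 0$ makes $K_i$ well defined), is exactly the classical argument. The paper itself gives no proof and simply cites standard references (Anderson--Moore, Simon), which contain essentially the derivation you sketched, so there is nothing to reconcile.
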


See, e.g., \cite{Anderson-1979-OF,Simon-2006-OSE} for the proof.

If the dimension of the hidden state $X_{k}$ is large, the covariance matrices
$P_{k|k-1}$ and $P_{k|k}$ are large dense matrices, hence storing such
matrices in memory with the current hardware is almost impossible, and the
matrix products in the computation of $P_{k|k-1}$ are also problematic. To
solve these problems, the idea is to use ensemble methods.

\subsection{Ensemble Kalman filter (EnKF)}

\label{sec:EnKF}The idea behind the ensemble Kalman filter is to use Monte
Carlo samples and the corresponding empirical covariance matrix instead of the
forecast covariance matrix $P_{k|k-1}$ \cite{Evensen-2009-DAE}. Denote by $n$
the ensemble member index, $n=1,\ldots,N$.

\begin{algorithm}
[EnKF] For $i=0$, $X_{0|0}^{n}\sim N\left(  x_{\mathrm{b}},B\right)  $. For
$i=1,\ldots,k$, given an analysis ensemble $X_{i-1|i-1}^{1},\ldots
,X_{i-1|i-1}^{N}$ at time $i-1$, the ensemble at time $i$ is built as
\begin{align}
X_{i|i-1}^{n}  &  ={M}_{i}X_{i-1|i-1}^{n}+\mu_{i}+V_{i}^{n},\quad V_{i}%
^{n}\sim N\left(  0,{Q}_{i}\right)  ,\label{eq:advance_model}\\
X_{i|i}^{n}  &  =X_{i|i-1}^{n}+{P}_{i|i-1}^{N}{H}_{i}^{\mathrm{T}}\left(
{H}_{i}{P}_{i|i-1}^{N}{H}_{i}^{\mathrm{T}}+{R}_{i}\right)  ^{-1}(y_{i}%
-W_{i}^{n}-H_{i}X_{i|i-1}^{n})\quad W_{i}^{n}\sim N\left(  0,{R}_{i}\right)  ,
\label{eq:ens_analysis}%
\end{align}
where ${P}_{i|i-1}^{N}$ is the covariance estimate from the ensemble $\left[
X_{i|i-1}^{n}\right]  _{n=1}^{N}$,
\[
{P}_{i|i-1}^{N}=\frac{1}{N-1}\sum_{n=1}^{N}\left(  X_{i|i-1}^{n}-\bar
{X}_{i|i-1}^{N}\right)  \left(  X_{i|i-1}^{n}-\bar{X}_{i|i-1}^{N}\right)
^{\mathrm{T}},\text{ where }\bar{X}_{i|i-1}^{N}=\frac{1}{N}\sum_{n=1}%
^{N}X_{i|i-1}^{n}.
\]
The empirical covariance matrix ${P}_{i|i-1}^{N}$ is never computed or stored,
indeed to compute the matrix products ${P}_{i|i-1}^{N}H_{i}^{\mathrm{T}}$ and
$H_{i}{P}_{i|i-1}^{N}H_{i}^{\mathrm{T}}$ only matrix-vector products are
needed:
\begin{align}
{P}_{i|i-1}^{N}H_{i}^{\mathrm{T}}  &  =\frac{1}{N-1}\sum_{n=1}^{N}\left(
X_{i|i-1}^{n}-\bar{X}_{i|i-1}^{N}\right)  \left(  X_{i|i-1}^{n}-\bar
{X}_{i|i-1}^{N}\right)  ^{\mathrm{T}}H_{i}^{\mathrm{T}}\label{eq:PHt}\\
&  =\frac{1}{N-1}\sum_{n=1}^{N}\left(  X_{i|i-1}^{n}-\bar{X}_{i|i-1}%
^{N}\right)  h_{n}^{\mathrm{T}},\nonumber\\
H_{i}{P}_{i|i-1}^{N}H_{i}^{\mathrm{T}}  &  =H_{i}\frac{1}{N-1}\sum_{n=1}%
^{N}\left(  X_{i|i-1}^{n}-\bar{X}_{i|i-1}^{N}\right)  \left(  X_{i|i-1}%
^{n}-\bar{X}_{i|i-1}^{N}\right)  ^{\mathrm{T}}H_{i}^{\mathrm{T}}=\frac{1}%
{N-1}\sum_{n=1}^{N}h_{n}h_{n}^{\mathrm{T}}, \label{eq:HPHt}%
\end{align}
where
\begin{equation}
h_{n}=H_{i}\left(  X_{i|i-1}^{n}-\bar{X}_{i|i-1}^{N}\right)  . \label{eq:h_n}%
\end{equation}

\end{algorithm}

Note that the i.i.d. random vectors $(V_{i}^{1},\ldots,V_{i}^{N})$ are
simulated here with the same statistics as the additive Gaussian noise $V_{i}$
in the original state in eq.~(\ref{eq:filter_model_eq}). The i.i.d. random
vectors $(W_{i}^{1},\ldots,W_{i}^{N})$ are simulated here with the same
statistics as the additive Gaussian noise $W_{i}$ in the original state in
eq.~(\ref{eq:filter_observation_eq}). The initial ensemble $\left[
X_{0|0}\right]_{n=1}^{N}$ is simulated as i.i.d. Gaussian random vectors
with mean $x_{\mathrm{b}}$ and covariance $B$, i.e. with the same statistics
as the initial state $X_{0}$.

\subsection{Convergence of the EnKF}

\label{sec:EnKF-convergence}For theoretical purposes, we define an auxiliary
ensemble $U_{i|i}=[U_{i|i}^{n}]_{n=1}^{N}$, $i=0,\ldots,k,$ called the
\emph{reference ensemble}, in the same way as the ensemble $X_{i|i}%
=[X_{i|i}^{n}]_{n=1}^{N}$, but this time for the updates of the ensemble
$U_{i|i}$ we use the exact covariances instead of their empirical estimates.
The realizations of the random perturbations $V_{i}^{n}$ and $W_{i}^{n}$ in
both ensembles are the same. Thus, for $i=0$, $U_{0|0}^{n}=X_{0|0}^{n}$ and
for $i=1,\ldots,k$, we build ${U}_{i|i}$ up to time $i$ conditioned on
observations up to time $i$,%
\begin{align}
U_{i|i-1}^{n}  &  ={M}_{i}U_{i-1|i-1}^{n}+\mu_{i}+V_{i}^{n},\quad V_{i}%
^{n}\sim N\left(  0,{Q}_{i}\right)  ,\quad n=1,\ldots,N,\label{eq:U_forcast}\\
U_{i|i}^{n}  &  =U_{i|i-1}^{n}+{P}_{i|i-1}{H}_{i}^{\mathrm{T}}\left(  {H}%
_{i}{P}_{i|i-1}{H}_{i}^{\mathrm{T}}+{R}_{i}\right)  ^{-1}\left(  y_{i}%
-W_{i}^{n}-H_{i}U_{i|i-1}^{n}\right)  , \label{eq:U_analysis}%
\end{align}
where $W_{i}^{n}\sim N\left(  0,{R}_{i}\right)  $ is a random perturbation,
and $P_{i|i-1}$ is the covariance of $U_{i|i-1}^{1}$
\[
{P}_{i|i-1}=E\left[  \left(  U_{i|i-1}^{n}-E(U_{i|i-1}^{n})\right)  \left(
U_{i|i-1}^{n}-E(U_{i|i-1}^{n})\right)  ^{\mathrm{T}}\right]  .
\]
Note that the only difference between the two ensembles $X_{k|k}$ and
$U_{k|k}$ is that for the construction of $X_{k|k}$, we use the empirical
prediction covariance ${P}_{k|k-1}^{N}$ of the ensemble, which depends on all
ensemble members, instead of the exact covariance. Therefore, $X_{k|k}^{n}$,
$n=1,\ldots,N$, are in general dependent. On the other hand:

\begin{lemma}
\label{lem:EnKF-Uk-exact} The members of the ensemble $[U_{k|k}^{n}]_{n=1}^{N}$
are i.i.d and the distribution of each $U_{k|k}^{n}$ is the same as the
filtering distribution.
\end{lemma}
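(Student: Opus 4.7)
The plan is to prove both claims simultaneously by induction on $i$, showing that for every $i=0,\ldots,k$ the ensemble members $U_{i|i}^{1},\ldots,U_{i|i}^{N}$ are i.i.d.\ with common law $N(X_{i|i},P_{i|i})$, which by Lemma~\ref{lem:KF} is the filtering distribution at time $i$. The crucial structural observation is that, because the reference ensemble is driven by the \emph{exact} covariances $P_{i|i-1}$ instead of the empirical $P_{i|i-1}^{N}$, the gain $K_{i}=P_{i|i-1}H_{i}^{\mathrm{T}}(H_{i}P_{i|i-1}H_{i}^{\mathrm{T}}+R_{i})^{-1}$ is a deterministic matrix, so the update (\ref{eq:U_forcast})--(\ref{eq:U_analysis}) acts on each member separately and does not couple the ensemble.

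For the base case, $U_{0|0}^{n}=X_{0|0}^{n}$ are drawn i.i.d.\ from $N(x_{\mathrm{b}},B)=N(X_{0|0},P_{0|0})$ by construction. For the inductive step, assume $U_{i-1|i-1}^{1},\ldots,U_{i-1|i-1}^{N}$ are i.i.d.\ $N(X_{i-1|i-1},P_{i-1|i-1})$ and are independent of the forthcoming perturbations $V_{i}^{n}$ and $W_{i}^{n}$. The forecast $U_{i|i-1}^{n}=M_{i}U_{i-1|i-1}^{n}+\mu_{i}+V_{i}^{n}$ is an affine function of two independent Gaussians, applied componentwise; hence the vectors $U_{i|i-1}^{n}$ are again i.i.d.\ Gaussian, with mean $M_{i}X_{i-1|i-1}+\mu_{i}=X_{i|i-1}$ and covariance $M_{i}P_{i-1|i-1}M_{i}^{\mathrm{T}}+Q_{i}=P_{i|i-1}$ by the Kalman forecast formulas.

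The analysis step $U_{i|i}^{n}=(I-K_{i}H_{i})U_{i|i-1}^{n}+K_{i}(y_{i}-W_{i}^{n})$ is an affine function of the two independent quantities $U_{i|i-1}^{n}$ and $W_{i}^{n}$, with coefficients $K_{i}$ that do not depend on $n$ or on the other ensemble members. Therefore the $U_{i|i}^{n}$ are i.i.d.\ Gaussian. A direct computation gives mean
\[
E[U_{i|i}^{n}]=X_{i|i-1}+K_{i}(y_{i}-H_{i}X_{i|i-1})=X_{i|i},
\]
and, using independence of $U_{i|i-1}^{n}$ and $W_{i}^{n}$,
\[
\mathrm{Cov}(U_{i|i}^{n})=(I-K_{i}H_{i})P_{i|i-1}(I-K_{i}H_{i})^{\mathrm{T}}+K_{i}R_{i}K_{i}^{\mathrm{T}}=(I-K_{i}H_{i})P_{i|i-1}=P_{i|i},
\]
where the middle equality is the standard Joseph-form identity, which follows from the definition of $K_{i}$. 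Taking $i=k$ completes the induction and yields the claim.

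There is no real obstacle here; the only mildly delicate point is the covariance computation in the analysis step, where one must be careful to exploit the independence of $W_{i}^{n}$ from $U_{i|i-1}^{n}$ (the very reason for the perturbed-observation trick of Burgers et al.)\ so that the two cross terms vanish and the Joseph form simplifies to $(I-K_{i}H_{i})P_{i|i-1}$. Everything else is a bookkeeping exercise in propagating the i.i.d.\ property through affine maps with deterministic coefficients.
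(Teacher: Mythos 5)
Your proof is correct and follows essentially the same route as the paper: induction on the time index, using the fact that the exact (deterministic) gain $K_{i}$ makes the forecast and analysis steps affine maps with non-random coefficients acting on each member separately, so i.i.d.\ Gaussianity is preserved, and the mean and covariance coincide with the Kalman filter's. The only difference is that you carry out the mean/covariance computation (via the Joseph-form identity) explicitly, whereas the paper cites Burgers et al.\ for that step.
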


\begin{proof}
The proof is by induction and the same as in \cite[Lemma 4]{Mandel-2011-CEK},
except we take the additional perturbation $V_{k}^{n}$ into account. Since
$\left[  V_{k}^{n}\right]  _{n=1}^{N}$ are Gaussian and independent of
everything else by assumption, $[U_{k|k}^{n}]_{n=1}^{N}$ are independent and
Gaussian. The forecast covariance ${P}_{k|k-1}$ is constant (non-random), and,
consequently, the analysis step (\ref{eq:U_analysis}) is a linear
transformation, which preserves the independence of the ensemble members and
the Gaussianity of the distribution. It is known that the members of the
reference ensemble have the same mean and covariance as given by the Kalman
filter \cite[eq.~(15)\ and (16)]{Burgers-1998-ASE}. The proof is completed by
noting that a Gaussian distribution is determined by its mean and covariance.
\end{proof}

\begin{theorem}
\label{thm:EnKF_convergence} For any $i=0,\ldots,k$, the random matrix
\begin{equation}
\left[
\begin{array}
[c]{c}%
X_{i|i}^{1},\ldots,X_{i|i}^{N}\\
U_{i|i}^{1},\ldots,U_{i|i}^{N}%
\end{array}
\right]  \label{eq:exch}%
\end{equation}
has exchangeable columns, and%
\[
X_{i|i}^{1}\rightarrow U_{i|i}^{1},
\]
in all $L^{p}$, $1\leq p<\infty$, as $N\rightarrow\infty$. Also,
\begin{align*}
\bar{X}_{i|i-1}^{N}  &  =\frac{1}{N}\sum_{n=1}^{N}X_{i|i}^{n}\rightarrow
E\left(  U_{i|i}^{1}\right)  ,\\
{P}_{i|i-1}^{N}  &  =\frac{1}{N-1}\sum_{n=1}^{N}\left(  X_{i|i-1}^{n}-\bar
{X}_{i|i-1}^{N}\right)  \left(  X_{i|i-1}^{n}-\bar{X}_{i|i-1}^{N}\right)
^{\mathrm{T}}\\
&  \rightarrow{P}_{i|i-1}=E\left[  \left(  U_{i|i-1}^{1}-E\left(
U_{i|i-1}^{1}\right)  \right)  \left(  U_{i|i-1}^{1}-E\left(  U_{i|i-1}%
^{1}\right)  \right)  ^{\mathrm{T}}\right]  ,
\end{align*}
in all $L^{p}$, $1\leq p<\infty$, as $N\rightarrow\infty$.
\end{theorem}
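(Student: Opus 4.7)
The strategy is induction on $i$, following closely the template of \cite[Theorem 3]{Mandel-2011-CEK} and simply tracking the one new ingredient: the additional model-noise term $V_i^n$ in \eqref{eq:advance_model}. The statements to be established in the inductive step are (a) joint exchangeability of the columns of \eqref{eq:exch}, (b) $L^p$ convergence $X_{i|i}^1 \to U_{i|i}^1$ for every $p \in [1,\infty)$, and (c) the mean and covariance convergences, which will follow from (a)--(b) and standard arguments on exchangeable sequences.

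For the base case $i=0$ I take $X_{0|0}^n = U_{0|0}^n$ sampled i.i.d.\ from $N(x_{\mathrm b},B)$, so exchangeability is immediate and the $L^p$ statement is trivial. For the induction step, I first handle exchangeability: the forecast step \eqref{eq:advance_model} is the coordinatewise sum of the exchangeable family $M_i X_{i-1|i-1}^n + \mu_i$ with the i.i.d., hence exchangeable, perturbations $V_i^n$, so Lemma~\ref{lem:exchangeability_sum} preserves joint exchangeability of $[X_{i|i-1}^n;U_{i|i-1}^n]_n$. The analysis step \eqref{eq:ens_analysis} writes $X_{i|i}^n = F(X_{i|i-1}^1,\dots,X_{i|i-1}^N,X_{i|i-1}^n,W_i^n)$, where $F$ depends on the first $N$ arguments only symmetrically through the sample covariance ${P}_{i|i-1}^N$; combined with the analogous representation of $U_{i|i}^n$ (whose $F$ only uses the deterministic $P_{i|i-1}$) and the fact that the $W_i^n$ are i.i.d., Lemma~\ref{lem:exchangeability_F} (applied componentwise to the stacked vector) gives the required joint exchangeability.

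The main obstacle is upgrading convergence from probability to $L^p$, and this is where care is needed. Convergence in probability $X_{i|i-1}^1 \to U_{i|i-1}^1$ follows from the induction hypothesis via the linear recursion; convergence ${P}_{i|i-1}^N \to P_{i|i-1}$ in probability follows from exchangeability, the inductive $L^p$ hypothesis on $U_{i-1|i-1}^1$, and a standard bias/variance computation for the sample covariance of an exchangeable family (the variance decays like $1/N$ because the off-diagonal covariances match the Kalman covariance by Lemma~\ref{lem:EnKF-Uk-exact}). Since $H_i P_{i|i-1}^N H_i^{\mathrm T}+R_i \succeq R_i \succ 0$ a.s., the map $S \mapsto S H_i^{\mathrm T}(H_i S H_i^{\mathrm T}+R_i)^{-1}$ is continuous, so the continuous mapping theorem, together with the convergence of $X_{i|i-1}^1$ in probability, yields $X_{i|i}^1 \xrightarrow{\mathrm P} U_{i|i}^1$. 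To pass to $L^p$, I invoke the uniform integrability lemma: it suffices to show $\|X_{i|i}^1\|_{p'}$ is bounded in $N$ for every $p'<\infty$. For this I exploit the deterministic bound $|{P}_{i|i-1}^N H_i^{\mathrm T}(H_i {P}_{i|i-1}^N H_i^{\mathrm T}+R_i)^{-1}| \le |H_i|\,|R_i^{-1}|\,|{P}_{i|i-1}^N|$ (using $R_i \succ 0$), so that
\[
|X_{i|i}^1| \le |X_{i|i-1}^1| + |H_i|\,|R_i^{-1}|\,|{P}_{i|i-1}^N|\bigl(|y_i|+|W_i^1|+|H_i|\,|X_{i|i-1}^1|\bigr),
\]
and $\|{P}_{i|i-1}^N\|_{p'}$ is bounded in $N$ by exchangeability plus the inductive moment bounds on $X_{i|i-1}^1$. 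Hölder's inequality then gives $L^{p}$ boundedness for every $p$, and the uniform-integrability lemma upgrades the convergence in probability to convergence in every $L^p$, $1 \le p < \infty$.

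For the sample mean and covariance, I combine exchangeability of the columns of \eqref{eq:exch} with the just-established $L^p$ convergence of $X_{i|i}^1$ and $X_{i|i-1}^1$ to their reference counterparts. Writing
\[
\bar X_{i|i-1}^N - E(U_{i|i-1}^1) = \bigl(\bar X_{i|i-1}^N - \bar U_{i|i-1}^N\bigr) + \bigl(\bar U_{i|i-1}^N - E(U_{i|i-1}^1)\bigr),
\]
the first summand is bounded in $L^p$ by $\|X_{i|i-1}^1-U_{i|i-1}^1\|_p$ thanks to exchangeability, and hence tends to zero; the second tends to zero in $L^p$ by the strong law for the i.i.d.\ sequence $U_{i|i-1}^n$ together with uniform integrability (using the established moment bounds). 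The same decomposition, applied coordinatewise to the rank-one summands $(X_{i|i-1}^n - \bar X_{i|i-1}^N)(X_{i|i-1}^n - \bar X_{i|i-1}^N)^{\mathrm T}$ and their $U$ analogues, gives ${P}_{i|i-1}^N \to P_{i|i-1}$ in every $L^p$, completing the induction.
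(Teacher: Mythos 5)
Your proposal is correct and follows essentially the same route as the paper, which simply invokes the induction-over-time argument of \cite[Theorem 1]{Mandel-2011-CEK} (exchangeability preserved via Lemmas \ref{lem:exchangeability_sum} and \ref{lem:exchangeability_F}, convergence in probability via the continuous mapping theorem, uniform-in-$N$ moment bounds, and the uniform integrability lemma to upgrade to $L^p$) with the model noise $V_i^n$ carried through each step. You have in effect written out the details that the paper leaves to the citation, including the same Kalman-gain bound used later in \eqref{eq:kg-bound} and the same decomposition of the sample mean and covariance through the reference ensemble.
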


\begin{proof}
The theorem is again a simple extension of that of \cite[Theorem
1]{Mandel-2011-CEK}, by adding the model error $V_{i}^{n}$ in each step of the
induction over $i$.
\end{proof}

Note that since (\ref{eq:exch}) has exchangeable columns and $X_{i|i}%
^{1}\rightarrow U_{i|i}^{1}$ in $L^{p}$, we have the same convergence result
for every fixed $n$, $X_{i|i}^{n}\rightarrow U_{i|i}^{n}$ in all $L^{p}$, as
$N\rightarrow\infty$.

\section{Kalman smoothing}

\label{sec:KS_EnKS}

\subsection{Kalman smoother (KS)}

\label{sec:smoother}
A smoother estimates the composite hidden state
\[
X_{0:i}=\left[
\begin{array}
[c]{c}%
X_{0}\\
\vdots\\
X_{i}%
\end{array}
\right]
\]
given all observations $y_{1},\ldots,y_{i}$. Again, the Kalman smoother
provides the exact result in the linear Gaussian case. Denote by $X_{0:i|\ell}$
the expectation of the composite state $X_{0:i}$ given the observations
$y_{1},\ldots,y_{\ell}$, and by $P_{0:i|\ell}$ the corresponding covariance.
In the linear case, we write the stochastic system (\ref{eq:filter_model_eq}%
)--(\ref{eq:filter_observation_eq}) in terms of the composite state $X_{0:i}$
as
\begin{align}
X_{0:i}  &  =\left[
\begin{array}
[c]{cccc}%
I_{m} & 0 & \ldots & 0\\
0 & I_{m} & \vdots & \vdots\\
\vdots & \ddots & \ddots & 0\\
0 & \ldots & \ddots & I_{m}\\
0 & \ldots & 0 & M_{i}%
\end{array}
\right]  X_{0:i-1}+\left[
\begin{array}
[c]{c}%
0\\
\vdots\\
\mu_{i}%
\end{array}
\right]  +\left[
\begin{array}
[c]{c}%
0\\
\vdots\\
V_{i}%
\end{array}
\right] \label{eq:smoother_model}\\
&  =\left[
\begin{array}
[c]{c}%
I_{m(i-1)}\\
\tilde{M}_{i}%
\end{array}
\right]  X_{0:i-1}+\left[
\begin{array}
[c]{c}%
0\\
\vdots\\
\mu_{i}%
\end{array}
\right]  +\left[
\begin{array}
[c]{c}%
0\\
\vdots\\
V_{i}%
\end{array}
\right]  ,\quad V_{i}\sim N\left(  0,Q_{i}\right)  ,\nonumber\\
y_{i}  &  =\left[  0,\ldots,{H}_{i}\right]  X_{0:i}+W_{i}=\tilde{H}_{i}%
X_{0:i}+W_{i},\quad W_{i}\sim N\left(  0,R_{i}\right)  ,
\label{eq:smoother_obs}%
\end{align}
where $m$ is the dimension of the state $X_{i}$,
$I_{d}$ is the identity matrix in $\mathbf{R}^{d\times d}$, and
\begin{equation}
\tilde{H}_{i}=\left[  0,\ldots,{H}_{i}\right]  ,\text{\quad}\tilde{M}%
_{i}=\left[  0,\ldots,M_{i}\right]  . \label{eq:smoother_obs_op}%
\end{equation}
Applying the Kalman filter analysis step (\ref{eq:Kalman-model}%
)--(\ref{eq:Kalman-update-covariance}) to the observation
(\ref{eq:smoother_obs}) of the composite state $X_{0:i}$, we obtain the Kalman smoother:%

\begin{align*}
X_{0:i|i-1}  &  =\left[
\begin{array}
[c]{c}%
I_{m(i-1)}\\
\tilde{M}_{i}%
\end{array}
\right]  X_{0:i-1|i-1}+\left[
\begin{array}
[c]{c}%
0\\
\vdots\\
\mu_{i}%
\end{array}
\right]  =\left[
\begin{array}
[c]{c}%
X_{0:i-1|i-1}\\
M_{i}X_{i-1,i-1}+\mu_{i}%
\end{array}
\right]  ,\\
P_{0:i|i-1}  &  =\left[
\begin{array}
[c]{c}%
I_{m(i-1)}\\
\tilde{M}_{i}%
\end{array}
\right]  P_{0:i-1|i-1}\left[
\begin{array}
[c]{c}%
I_{m(i-1)}\\
\tilde{M}_{i}%
\end{array}
\right]  ^{\mathrm{T}}+\left[
\begin{array}
[c]{cc}%
0 & 0\\
0 & Q_{i}%
\end{array}
\right] \\
&  =\left[
\begin{array}
[c]{cc}%
P_{0:i-1|i-1} & P_{0:i-1|i-1}\tilde{M}_{i}^{\mathrm{T}}\\
\tilde{M}_{i}P_{0:i-1|i-1} & \tilde{M}_{i}P_{0:i-1|i-1}\tilde{M}%
_{i}^{\mathrm{T}}+Q_{i}%
\end{array}
\right]  ,\\
K_{i}  &  =P_{0:i|i-1}\tilde{H}_{i}^{\mathrm{T}}\left(  R_{i}+\tilde{H}%
_{i}P_{0:i|i-1}\tilde{H}_{i}^{\mathrm{T}}\right)  ^{-1}\\
&  =P_{0:i|i-1}\tilde{H}_{i}^{\mathrm{T}}\left(  R_{i}+{H}_{i}P_{i,i|i-1}%
{H}_{i}^{\mathrm{T}}\right)  ^{-1},\\
X_{0:i|i}  &  =X_{0:i|i-1}+K_{i}\left(  y_{i}-\tilde{H}_{i}X_{0:i|i-1}\right)
=X_{0:i|i-1}+K_{i}\left(  y_{i}-{H}_{i}X_{i|i-1}\right)  ,\\
P_{0:i|i}  &  =\left(  I_{mi}-K_{i}\tilde{H}_{i}\right)  P_{0:i|i-1}.
\end{align*}

\begin{lemma}
\label{lem:KS}The distribution $N\left(  X_{0:k|k},P_{0:k,0:k|k}\right)  $
from the Kalman smoother is the smoothing distribution, and its mean
$X_{0:k|k}$ is the solution of the least squares problem,%
\begin{equation}
\text{ }X_{0:k|k}=%
\mathop{\mathrm{argmin}}%
_{x_{0:k}}\biggl(\left\vert x_{0}-x_{\mathrm{b}}\right\vert _{B^{-1}}^{2}%
+\sum_{i=1}^{k}\left\vert x_{i}-{M}_{i}x_{i-1}-\mu_{i}\right\vert _{Q_{i}%
^{-1}}^{2}+\sum_{i=1}^{k}\left\vert y_{i}-{H}_{i}x_{i}\right\vert _{R_{i}%
^{-1}}^{2}\biggr). \label{eq:eq_ks_ls}%
\end{equation}

\end{lemma}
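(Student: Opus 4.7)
The plan is to recognize that equations (\ref{eq:smoother_model})--(\ref{eq:smoother_obs}) form a linear Gaussian state-space model in the composite state $X_{0:i}$, with the block matrix in (\ref{eq:smoother_model}) playing the role of the model operator and $\tilde{H}_i$ from (\ref{eq:smoother_obs_op}) the role of the observation operator. The filtering distribution of this augmented system at time $k$ is, by definition, the smoothing distribution of the original system, and the Kalman smoother formulas derived just above are simply the Kalman filter formulas applied to this augmented model. This reduces the lemma to Lemma \ref{lem:KF} plus the standard MAP characterization of the Gaussian mean.

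For the distributional statement, I would proceed by induction on $i$. The base case is the definition $X_{0|0}=x_{\mathrm{b}}$, $P_{0|0}=B$, which agrees with $X_0\sim N(x_{\mathrm{b}},B)$. For the inductive step, assuming $X_{0:i-1}\mid y_{1:i-1}\sim N(X_{0:i-1|i-1},P_{0:i-1|i-1})$, note that $X_{0:i}$ is an affine function of $X_{0:i-1}$ and the independent Gaussian noise $V_i$, so $X_{0:i}\mid y_{1:i-1}$ is Gaussian with the mean $X_{0:i|i-1}$ and covariance $P_{0:i|i-1}$ displayed in the recursion preceding the lemma. Applying the standard Gaussian conditioning formula to $y_i=\tilde{H}_i X_{0:i}+W_i$ with $W_i\sim N(0,R_i)$ independent of everything preceding then yields a Gaussian posterior with the claimed $X_{0:i|i}$, Kalman gain $K_i$, and covariance $P_{0:i|i}$. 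Although Lemma \ref{lem:KF} was stated for a constant state dimension, its derivation uses only these two Gaussian manipulations and therefore applies verbatim when the composite state grows with $i$.

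For the least-squares characterization (\ref{eq:eq_ks_ls}), I would use that the mean of a Gaussian distribution coincides with its mode. By the mutual independence of $X_0$, $V_1,\ldots,V_k$, $W_1,\ldots,W_k$ postulated in (\ref{eq:filter_background_eq})--(\ref{eq:filter_observation_eq}), the joint density factorizes as
\[
p(x_{0:k},y_{1:k})=p(x_0)\prod_{i=1}^{k}p(x_i\mid x_{i-1})\prod_{i=1}^{k}p(y_i\mid x_i),
\]
where each factor is an explicit Gaussian density with precision matrix $B^{-1}$, $Q_i^{-1}$, or $R_i^{-1}$ respectively. Taking $-\log$, discarding constants independent of $x_{0:k}$, and using $p(x_{0:k}\mid y_{1:k})\propto p(x_{0:k},y_{1:k})$ at fixed data, the MAP minimizer is exactly the right-hand side of (\ref{eq:eq_ks_ls}); by the first part of the proof it coincides with $X_{0:k|k}$.

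The only real subtlety is the changing dimension of the composite state, which prevents a literal citation of Lemma \ref{lem:KF}; either one re-runs the induction as above or invokes a dimension-agnostic version of linear Gaussian filtering. In either case the argument is a verification rather than a genuinely new computation, so I do not expect significant obstacles.
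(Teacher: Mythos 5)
Your proposal is correct and follows essentially the same route as the paper: the distributional claim is obtained by viewing the Kalman smoother as the Kalman filter applied to the composite state (which is exactly the reduction carried out in the text preceding the lemma), and the least-squares characterization is the observation that the Gaussian posterior mean is its mode, with the posterior density written out via Bayes' theorem. Your version merely fills in the induction and the density factorization that the paper leaves implicit.
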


\begin{proof}
The mean $X_{0:k|k}$ maximizes the joint posterior probability density of the
composite state $X_{0:k}$ given $y_{1:k}$, which is proportional to
\[
e^{-\frac{1}{2}\left\vert x_{0}-x_{\mathrm{b}}\right\vert _{B^{-1}}^{2}%
}e^{-\frac{1}{2}\sum_{i=1}^{k}\left\vert x_{i}-{M}_{i}x_{i-1}-\mu
_{i}\right\vert _{Q_{i}^{-1}}^{2}}e^{-\frac{1}{2}\sum_{i=1}^{k}\left\vert
y_{i}-{H}_{i}x_{i}\right\vert _{R_{i}^{-1}}^{2}}%
\]
from the Bayes theorem.
\end{proof}

Again, when $m$ is large, the covariance matrices $P_{0:i|i-1}$ and
$P_{0:i|i}$ are very large and the matrix products in the computation of
$P_{0:i|i-1}$ is also problematic to implement, and we turn to ensemble methods.

\subsection{Ensemble Kalman smoother (EnKS)}

\label{sec:EnKS} In the ensemble Kalman smoother \cite{Evensen-2009-DAE}, the
covariances are replaced by approximations from the ensemble. Let
\[
\left[  \left[
\begin{array}
[c]{c}%
X_{0|j}^{1}\\
\vdots\\
X_{i|j}^{1}%
\end{array}
\right]  ,\ldots,\left[
\begin{array}
[c]{c}%
X_{0|j}^{N}\\
\vdots\\
X_{i|j}^{N}%
\end{array}
\right]  \right]  =\left[  X_{0:i|j}^{1},\ldots,X_{0:i|j}^{N}\right]  =\left[
X_{0:i|j}^{n}\right]  _{n=1}^{N}%
\]
denote an ensemble of $N$ model states over time up to $i$, conditioned on the
observations up to time $j$.

\begin{algorithm}
[EnKS]\label{alg:EnKS} For $i=0$, the ensemble $\left[  X_{0|0}^{n}\right]
_{n=1}^{N}$ consists of i.i.d. Gaussian random variables
\begin{equation}
X_{0|0}^{n}\sim N\left(  x_{\mathrm{b}},B\right)  . \label{eq:EnKS-background}%
\end{equation}
For $i=1,\ldots,k$, advance the model to time $i$ by%
\begin{equation}
X_{i|i-1}^{n}={M}_{i}X_{i-1|i-1}^{n}+\mu_{i}+V_{i}^{n},\quad V_{i}^{n}\sim
N\left(  0,{Q}_{i}\right)  ,\quad n=1,\ldots,N. \label{eq:EnKS-advance}%
\end{equation}
Incorporate the observation at time $i$,
\[
y_{i}=\tilde{H}_{i}X_{i}+W_{i},\quad W_{i}\sim N\left(  0,{R}_{i}\right)
\]
into the ensemble of composite states $\left[  X_{0:i|i-1}^{1},\ldots
,X_{0:i|i-1}^{N}\right]  $ in the same way as for the EnKF update,%
\begin{equation}
X_{0:i|i}^{n}=X_{0:i|i-1}^{n}+{P}_{0:i|i-1}^{N}\tilde{H}_{i}^{\mathrm{T}%
}\left(  \tilde{H}_{i}{P}_{0:i|i-1}^{N}\tilde{H}_{i}^{\mathrm{T}}+{R}%
_{i}\right)  ^{-1}\left(  y_{i}-W_{i}^{n}-H_{i}X_{i|i-1}^{n}\right)
\label{eq:EnKS-analysis}%
\end{equation}
where ${P}_{0:i|i-1}^{N}$ is a covariance estimate from the ensemble
$X_{0:i|i-1}$ and $W_{i}^{n}\sim N\left(  0,{R}_{i}\right)  $ are random
perturbations. Similarly as in (\ref{eq:PHt})--(\ref{eq:h_n}), only the
following matrix-vector products are needed:
\begin{align}
{P}_{0:i|i-1}^{N}\tilde{H}_{i}^{\mathrm{T}}  &  =\frac{1}{N-1}\sum_{n=1}%
^{N}\left(  X_{0:i|i-1}^{n}-\bar{X}_{0:i|i-1}^{N}\right)  \left(
X_{i|i-1}^{n}-\bar{X}_{i|i-1}^{N}\right)  ^{\mathrm{T}}H_{i}^{\mathrm{T}%
}\label{eq:PHt_EnKS}\\
&  =\frac{1}{N-1}\sum_{n=1}^{N}\left(  X_{0:i|i-1}^{n}-\bar{X}_{0:i|i-1}%
^{N}\right)  h_{n}^{\mathrm{T}},\nonumber\\
\tilde{H}_{i}^{\mathrm{T}}{P}_{0:i|i-1}\tilde{H}_{i}^{\mathrm{T}}  &
=H_{i}\frac{1}{N-1}\sum_{n=1}^{N}\left(  X_{i|i-1}^{n}-\bar{X}_{i|i-1}%
^{N}\right)  \left(  X_{i|i-1}^{n}-\bar{X}_{i|i-1}^{N}\right)  ^{\mathrm{T}%
}H_{i}^{\mathrm{T}}=\frac{1}{N-1}\sum_{n=1}^{N}h_{n}h_{n}^{\mathrm{T}},
\label{eq:HPHt_EnKS}%
\end{align}
where again
\begin{equation}
h_{n}=H_{i}\left(  X_{i|i-1}^{n}-\bar{X}_{i|i-1}^{N}\right)
\label{eq:h_n_EnKS}%
\end{equation}
and
\begin{equation}
\bar{X}_{\ell|i-1}^{N}=\frac{1}{N}\sum_{n=1}^{N}X_{\ell|i-1}^{n}.
\label{eq:sample_mean_EnKS}%
\end{equation}

\end{algorithm}

\subsection{Convergence of the EnKS}

\label{sec:EnKS_convergence} Just as for the EnKF, we construct an ensemble
$U_{0:k|k}=[U_{0:k|k}^{n}]_{n=1}^{N}$ in the same way as the ensemble
$\left[X_{0:k|k}^n \right]_{n=1}^N$, but for the updates of the ensemble $U_{0:k|k}$ we use the exact
covariances instead of their empirical estimates. So, for $i=0$, $U_{0|0}%
^{n}=X_{0|0}^{n}$, and for $i=1,\ldots,k$, $n=1,\ldots,N,$%
\begin{align*}
U_{i|i-1}^{n}  &  ={M}_{i}U_{i-1|i-1}^{n}+\mu_{i}+V_{i}^{n},\quad V_{i}%
^{n}\sim N\left(  0,{Q}_{i}\right)  ,\\
U_{0:i|i}^{n}  &  =U_{0:i|i-1}^{n}+{P}_{0:i|i-1}\tilde{H}_{i}^{\mathrm{T}%
}\left(  \tilde{H}_{i}{P}_{0:i|i-1}\tilde{H}_{i}^{\mathrm{T}}+{R}_{i}\right)
^{-1}\left(  y_{i}-W_{i}^{n}-H_{i}U_{i|i-1}^{n}\right)  ,\\
&  W_{i}^{n}\sim N\left(  0,{R}_{i}\right)  ,
\end{align*}
where ${P}_{0:i|i-1}$ is the covariance of $U_{0:i|i-1}^{1}$.

Since the Kalman smoother is nothing else than the Kalman filter for the
composite state $X_{0:k}$, the same induction step as in Theorem
\ref{thm:EnKF_convergence} applies for each $i$, and we have the following.

\begin{lemma}
\label{lem:EnKS-Uk-exact} The random elements $U_{0:k|k}^{1},\ldots
,U_{0:k|k}^{N}$ are i.i.d and the distribution of each $U_{0:k|k}^{n}$ is the
same as the smoothing distribution. In particular, $E(U_{0:k|k})=X_{0:k|k}$,
with $X_{0:k|k}$ is the least squares solution (\ref{eq:eq_ks_ls}).
\end{lemma}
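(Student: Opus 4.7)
The plan is to imitate the proof of Lemma \ref{lem:EnKF-Uk-exact} almost verbatim, exploiting the observation already made by the authors in the paragraph preceding the statement: the Kalman smoother over $X_{0:k}$ is structurally just the Kalman filter applied to the composite state with the augmented dynamics and observation operators $\tilde M_i,\tilde H_i$ of \eqref{eq:smoother_model}--\eqref{eq:smoother_obs_op}. So the reference ensemble $U_{0:k|k}^{1},\ldots,U_{0:k|k}^{N}$ is exactly the reference ensemble of Lemma \ref{lem:EnKF-Uk-exact} for the composite-state filter, and the same inductive argument should apply.

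I would proceed by induction on $i=0,1,\ldots,k$, proving at each step that $U_{0:i|i}^{1},\ldots,U_{0:i|i}^{N}$ are i.i.d.\ Gaussian with the correct mean $X_{0:i|i}$ and covariance $P_{0:i|i}$. The base case $i=0$ is immediate from \eqref{eq:EnKS-background} since $U_{0|0}^{n}=X_{0|0}^{n}\sim N(x_{\mathrm b},B)$ are i.i.d., and this agrees with the smoothing distribution before any data have been assimilated. For the inductive step, assume $U_{0:i-1|i-1}^{n}$ are i.i.d.\ Gaussian with mean $X_{0:i-1|i-1}$ and covariance $P_{0:i-1|i-1}$. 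The forecast step sets $U_{i|i-1}^{n}=M_{i}U_{i-1|i-1}^{n}+\mu_{i}+V_{i}^{n}$ with $V_{i}^{n}$ i.i.d.\ Gaussian and independent of everything else, so the augmented vectors $U_{0:i|i-1}^{n}$ remain i.i.d.\ Gaussian. The analysis step is the affine map
\[
U_{0:i|i}^{n}=\bigl(I_{mi}-K_{i}\tilde H_{i}\bigr)U_{0:i|i-1}^{n}+K_{i}(y_{i}-W_{i}^{n}),
\]
where $K_{i}=P_{0:i|i-1}\tilde H_{i}^{\mathrm{T}}(\tilde H_{i}P_{0:i|i-1}\tilde H_{i}^{\mathrm{T}}+R_{i})^{-1}$ is a deterministic matrix because the update uses the exact $P_{0:i|i-1}$. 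Hence $U_{0:i|i}^{n}$ is an affine function of $U_{0:i|i-1}^{n}$ and the independent noise $W_{i}^{n}$, which preserves both the i.i.d.\ structure and Gaussianity.

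To identify the distribution, I would compute the mean and covariance directly; the recursions for $E(U_{0:i|i}^{n})$ and $\mathrm{cov}(U_{0:i|i}^{n})$ one obtains are precisely the Kalman smoother recursions from Section \ref{sec:smoother}, so by induction they coincide with $X_{0:i|i}$ and $P_{0:i|i}$. This is exactly the classical computation of Burgers, van Leeuwen and Evensen \cite[eq.~(15)--(16)]{Burgers-1998-ASE} lifted to the composite state; the only new ingredient over Lemma \ref{lem:EnKF-Uk-exact} is that the forecast operator $\tilde M_i$ acts only on the last block of $X_{0:i-1}$, but this does not affect any of the algebra. Because a Gaussian distribution is determined by its mean and covariance, $U_{0:k|k}^{n}$ has the same law as the smoothing distribution from Lemma \ref{lem:KS}. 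The final claim $E(U_{0:k|k}^{n})=X_{0:k|k}$ and the least-squares characterization then follow immediately by combining the induction with Lemma \ref{lem:KS}.

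The proof is essentially bookkeeping, and I do not anticipate any real obstacle. The only item requiring a small comment is the need to emphasize that $P_{0:i|i-1}$ in the definition of $U_{0:i|i}^{n}$ is the (deterministic) covariance of one reference forecast member, not the empirical covariance ${P}_{0:i|i-1}^{N}$; without this, the Kalman gain would depend on the whole ensemble and neither independence nor Gaussianity would be preserved. This is the one point worth stating explicitly to justify linearity of the analysis step in $U_{0:i|i-1}^{n}$ and $W_{i}^{n}$.
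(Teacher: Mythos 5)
Your proposal is correct and matches the paper's approach: the paper simply observes that the Kalman smoother is the Kalman filter applied to the composite state and invokes the same induction as in the filter case (Lemma \ref{lem:EnKF-Uk-exact}), which is exactly the argument you spell out, including the key point that the deterministic gain (built from the exact covariance $P_{0:i|i-1}$ rather than the empirical one) makes the analysis step an affine map preserving the i.i.d.\ Gaussian structure, with mean and covariance matching the smoother recursions via the Burgers--van Leeuwen--Evensen computation.
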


\begin{theorem}
\label{thm:EnKS_convergence} For each time step $i=0,\ldots,k$, the random
matrix
\[
\left[
\begin{array}
[c]{c}%
X_{0:i|i}^{1},\ldots,X_{0:i|i}^{N}\\
U_{0:i|i}^{1},\ldots,U_{0:i|i}^{N}%
\end{array}
\right]  .
\]
has exchangeable columns, and $X_{0:i|i}^{1}\rightarrow U_{0:i|i}^{1}$,
$\bar{X}_{0:i|i}^N \rightarrow E(U_{0:i|i}^{1}),\text{ and }{P}_{0:i|i}%
^{N}\rightarrow{P}_{0:i|i}\text{ in }L^{p}$ as $N\rightarrow\infty$, for all
$1\leq p<\infty$.
\end{theorem}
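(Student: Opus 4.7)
The plan is to mirror the proof of Theorem~\ref{thm:EnKF_convergence} by viewing the EnKS as an EnKF applied to the composite state $X_{0:i}$, exactly as noted in the remark preceding the theorem. I would proceed by induction on $i$. The base case $i=0$ is immediate from (\ref{eq:EnKS-background}): the ensemble members $X_{0|0}^n$ coincide with the reference members $U_{0|0}^n$ and are i.i.d.\ Gaussian $N(x_{\mathrm{b}},B)$, so the paired matrix has exchangeable columns, $X_{0|0}^1\equiv U_{0|0}^1$, and the convergence of $\bar X_{0|0}^N$ and $P_{0|0}^N$ in every $L^p$ is the classical $L^p$ law of large numbers for i.i.d.\ Gaussian vectors.

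For the inductive step at time $i$, I would split the EnKS update into the forecast step (\ref{eq:EnKS-advance}) and the analysis step (\ref{eq:EnKS-analysis}). The forecast acts columnwise by a fixed linear map on $X_{i-1|i-1}^n$ plus the same i.i.d.\ Gaussian perturbations $V_i^n$ used in both the EnKS and the reference ensemble; by Lemma~\ref{lem:exchangeability_sum} the paired matrix remains exchangeable at time $i|i-1$, and linearity transports the inductive $L^p$ convergences of the first member, sample mean, and sample covariance to $i|i-1$. The analysis step is where the members become mutually dependent since the Kalman gain uses $P^N_{0:i|i-1}$; however, the update can be written as
\[
X_{0:i|i}^n = F\bigl(X_{0:i|i-1}^{1},\ldots,X_{0:i|i-1}^{N},\,W_i^{1},\ldots,W_i^{N};\, X_{0:i|i-1}^n,\,W_i^n\bigr),
\]
for a fixed continuous $F$ that is permutation invariant in the first $N$ ensemble arguments (only the sample mean and sample covariance of that block enter). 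Lemma~\ref{lem:exchangeability_F} then yields exchangeability of the paired matrix at time $i|i$, since $U^n_{0:i|i}$ is obtained by the same $F$ with $P^N_{0:i|i-1}$ replaced by the deterministic $P_{0:i|i-1}$ and with shared perturbations $V_i^n$, $W_i^n$.

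For the convergence, I would combine the continuous mapping theorem with uniform integrability. The inductive hypothesis plus the forecast step give $P^N_{0:i|i-1}\to P_{0:i|i-1}$ and $X^n_{0:i|i-1}\to U^n_{0:i|i-1}$ in every $L^p$, hence in probability. Since $R_i$ is positive definite, the smallest eigenvalue of $\tilde H_i P^N_{0:i|i-1}\tilde H_i^{\mathrm T}+R_i$ is bounded below uniformly by that of $R_i$, so matrix inversion in the Kalman gain is continuous on the relevant set; the continuous mapping theorem yields $X^n_{0:i|i}\to U^n_{0:i|i}$ in probability. To upgrade to $L^p$ I would check boundedness in a higher $L^q$ and invoke the uniform integrability lemma; the requisite moment bounds come from the Gaussianity of $V_i^n$, $W_i^n$, the deterministic upper bound on the spectral norm of the Kalman gain, and Cauchy--Schwarz on the quadratic expressions that appear in $P^N_{0:i|i-1}$. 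Finally, $\bar X^N_{0:i|i}\to E(U^1_{0:i|i})$ and $P^N_{0:i|i}\to P_{0:i|i}$ in $L^p$ follow from the exchangeability of the paired matrix exactly as in \cite{Mandel-2011-CEK}: exchangeability controls $\|\bar X^N_{0:i|i}-E(U^1_{0:i|i})\|_p$ by $\|X^1_{0:i|i}-U^1_{0:i|i}\|_p$ plus a vanishing term coming from the $L^p$ law of large numbers for the i.i.d.\ reference ensemble, and analogously for the sample covariance.

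The main obstacle is not any single step but the bookkeeping of $L^q$ moment bounds through the induction, because the Kalman gain depends nonlinearly on the random sample covariance. Concretely, one must maintain, uniformly in $N$ and for every finite $q$, bounds on $\|P^N_{0:i|i-1}\|_q$ and on the spectral norm of $(\tilde H_i P^N_{0:i|i-1}\tilde H_i^{\mathrm T}+R_i)^{-1}$, so that products of random matrices with random increments stay in every $L^q$. The second bound is immediate from the positive definiteness of $R_i$; the first reduces via Cauchy--Schwarz to $L^{2q}$ bounds on the ensemble members at the previous step, which is precisely why the induction must be carried out for all finite $p$ simultaneously rather than for a single $p$.
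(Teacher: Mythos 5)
Your proposal is correct and follows essentially the same route as the paper: the paper's entire proof consists of observing that the EnKS is the EnKF applied to the composite state $X_{0:i}$, so the induction step of Theorem~\ref{thm:EnKF_convergence} (itself deferred to \cite{Mandel-2011-CEK}) carries over verbatim. Your write-up simply makes explicit the exchangeability, continuous-mapping, uniform-integrability, and moment-bound bookkeeping that the paper leaves implicit by that reference.
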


\section{ Variational data assimilation and 4DVAR}

\label{sec:vda-4DVAR}

\subsection{4DVAR as an optimization problem}

\label{sec:4DVAR}We estimate the compound state $X_{0:k}$ of the stochastic
system (\ref{eq:filter_background_eq})--(\ref{eq:filter_observation_eq}),
conditioned on the observations $y_{1},\ldots,y_{k}$, by the maximum posterior
probability density,
\[
\mathbb{P}\left(  x_{0:k}|y_{1:k}\right)  \propto e^{-\frac{1}{2}\left(
\left\vert x_{0}-x_{\mathrm{b}}\right\vert _{B^{-1}}^{2}+\sum_{i=1}%
^{k}\left\vert x_{i}-\mathcal{M}_{i}(x_{i-1})-\mu_{i}\right\vert _{Q_{i}^{-1}%
}^{2}+\sum_{i=1}^{k}\left\vert y_{i}-\mathcal{H}_{i}(x_{i})\right\vert
_{R_{i}^{-1}}^{2}\right)  }\rightarrow\max_{x_{0:k}},
\]
which is the same as solving the nonlinear least squares problem for the
composite state $x_{0:k}$,%
\begin{equation}
\left\vert x_{0}-x_{\mathrm{b}}\right\vert _{B^{-1}}^{2}+\sum_{i=1}%
^{k}\left\vert x_{i}-\mathcal{M}_{i}(x_{i-1})-\mu_{i}\right\vert _{Q_{i}^{-1}%
}^{2}+\sum_{i=1}^{k}\left\vert y_{i}-\mathcal{H}_{i}(x_{i})\right\vert
_{R_{i}^{-1}}^{2}\rightarrow\min_{x_{0:k}}. \label{eq:nonlinear-ls}%
\end{equation}

Numerical solution of the nonlinear least squares problem
(\ref{eq:nonlinear-ls}) is the essence of weak-constraint 4-dimensional
variational data assimilation (4DVAR) \cite{Fisher-2005-EKS,Tremolet-2007-ME4}.

\subsection{The Levenberg-Marquardt method and incremental 4DVAR}

\label{sec:incremental-4DVAR}Consider an approximate solution $x_{0:k}^{j-1}$
of the nonlinear least squares problem (\ref{eq:nonlinear-ls}). We seek a
better approximation $x_{0:k}^{j}$. Linearizing the model and the observation
operators at $x_{0:k}^{j-1}$ by their tangent operators and adding a penalty
term to control the size of the increment $x_{0:k}^{j}-x_{0:k}^{j-1}$, yields
the linear least squares problem for $x_{0:k}^{j}$ in the Levenberg-Marquardt
(LM) method \cite{Levenberg-1944-MSC,Marquardt-1963-ALE} for the solution of
the nonlinear least squares (\ref{eq:nonlinear-ls}).

\begin{algorithm}
[LM method]\label{alg:LM_D} Given $x_{0:k}^{0}$ and $\gamma\geq0$, compute the
iterations $x_{0:k}^{j}$ for $j=1,2,\ldots$, as the solutions of the least
squares problem linearized at $x_{0:k}^{j-1}$,
\begin{align}
x_{0:k}^{j}  &  =%
\mathop{\mathrm{argmin}}%
_{x_{0:k}}\left\vert x_{0}-x_{b}\right\vert _{B^{-1}}^{2}+\sum_{i=1}%
^{k}\left\vert x_{i}-\mathcal{M}_{i}\left(  x_{i-1}^{j-1}\right)
-\mathcal{M}_{i}^{\prime}\left(  x_{i-1}^{j-1}\right)  \left(  x_{i-1}%
-x_{i-1}^{j-1}\right)  -\mu_{i}\right\vert _{Q_{i}^{-1}}^{2}\nonumber\\
&  +\sum_{i=1}^{k}\left\vert y_{i}-\mathcal{H}_{i}\left(  x_{i}^{j-1}\right)
-\mathcal{H}_{i}^{\prime}\left(  x_{i}^{j-1}\right)  \left(  x_{i}-x_{i}%
^{j-1}\right)  \right\vert _{R_{i}^{-1}}^{2}+\sum_{i=1}^{k}\gamma\left\vert
x_{i}-x_{i}^{j-1}\right\vert ^{2}. \label{eq:tangent-ls}%
\end{align}

\end{algorithm}

For $\gamma=0$, (\ref{eq:tangent-ls}) becomes the Gauss-Newton method, which
can converge at a rate close to quadratic, but convergence is not guaranteed
even locally. Under suitable technical assumptions, the LM method is
guaranteed to converge globally if the regularization parameter $\gamma$ is
large enough \cite{Osborne-1976-NLL, Gill-1978-ASN}, and a suitable sequence
of penalty parameters $\gamma_{j}\geq0$ changing from step to step can be
found adaptively. The LM method is a precursor of the trust-region method in
the sense that it seeks to determine when the faster Gauss-Newton method
($\gamma=0$) is applicable and when it is not and should be blended with a
slower but safer gradient descent method ($\gamma>0$). In this paper, we
consider only the case of a constant penalty parameter $\gamma>0$.

The Gauss-Newton method for the solution of nonlinear least squares is known
in atmospheric sciences as incremental 4DVAR \cite{Courtier-1994-SOI}. The use
of Levenberg-Marquardt iterations was proposed by \cite{Tshimanga-2008-LPA}.

\subsection{LM-EnKS with tangent operators}

\label{sec:EnKS-4DVAR-WD}From (\ref{eq:eq_ks_ls}), it follows that the linear
least squares problem (\ref{eq:tangent-ls}) can be interpreted as finding the
maximum posterior probability density for a linear stochastic system with all
Gaussian probability distributions. The penalty terms $\gamma |x_{i}
-x_{i}^{j-1}|^{2}$ are implemented as additional independent observations
\cite{Johns-2008-TEK} of the form%
\[
x_{i}^{j-1}=x_{i}+E_{i},\quad E_{i}\sim N\left(  0,\frac{1}{\gamma}%
I_{m}\right)  ,\quad i=1,\ldots,k.
\]

\begin{lemma}
\label{lem:LM-KS} The LM iterate $x_{0:k}^{j}$, defined by (\ref{eq:tangent-ls}%
), equals to the mean
\[
x_{0:k}^{j}=E\left(  X_{0:k|k}^{j}\right)  ,
\]
of the smoothing distribution of the stochastic system%
\begin{align}
X_{0}^{j}  &  \sim N\left(  x_{\mathrm{b}},B\right)
,\label{eq:increment-background}\\
X_{i}^{j}  &  =\mathcal{M}_{i}^{\prime}\left(  x_{i-1}^{j-1}\right)  \left(
X_{i-1}^{j}-x_{i-1}^{j-1}\right)  +\mathcal{M}_{i}\left(  x_{i-1}%
^{j-1}\right)  +\mu_{i}+V_{i}^{j},\quad V_{i}^{j}\sim N\left(  0,Q_{i}\right)
\quad i=1,\ldots,k,\label{eq:time_evol_j}\\
\tilde{y}_{i}  &  =\mathcal{\tilde{H}}_{i}\left(  x_{i}^{j-1}\right)
+\mathcal{\tilde{H}}_{i}^{^{\prime}}\left(  x_{i}^{j-1}\right)  \left(
X_{i}^{j}-x_{i}^{j-1}\right)  +\tilde{W}_{i}^{j},\quad\tilde{W}_{i}^{j}\sim
N\left(  0,\tilde{R}_{i}\right)  ,\quad i=1,\ldots,k,
\label{eq:joint_obs_gamma}%
\end{align}
where%
\begin{equation}
\tilde{y}_{i}=\left[
\begin{array}
[c]{c}%
y_{i}\\
x_i^{j-1}
\end{array}
\right]  ,\quad\mathcal{\tilde{H}}_{i}=\left[
\begin{array}
[c]{c}%
\mathcal{H}_{i}\\
I_m
\end{array}
\right]  ,\quad\tilde{R}_{i}=\left[
\begin{array}
[c]{cc}%
R_{i} & 0\\
0 & \frac{1}{\gamma}I_{m}%
\end{array}
\right]  , \label{eq:joint_obs_gamma_def}%
\end{equation}
or, equivalently%
\begin{align}
X_{0}^{j}  &  \sim N\left(  x_{\mathrm{b}},B\right)
,\label{eq:increment-background-equiv}\\
X_{i}^{j}  &  =M_{i}^{j}X_{i-1}^{j}+\tilde{\mu}_{i}^{j}+V_{i}^{j},\quad
V_{i}^{j}\sim N\left(  0,Q_{i}\right)  \quad i=1,\ldots
,k,\label{eq:time_evol_j-equiv}\\
\tilde{y}_{i}^{j}  &  =\tilde{H}_{i}^{j}X_{i}^{j}+\tilde{W}_{i}^{j}%
,\quad\tilde{W}_{i}^{j}\sim N\left(  0,\tilde{R}_{i}\right)  ,\quad
i=1,\ldots,k, \label{eq:joint_obs_gamma-equiv}%
\end{align}
where%
\begin{align*}
M_{i}^{j}  &  =\mathcal{M}_{i}^{\prime}\left(  x_{i-1}^{j-1}\right), \quad 
\tilde{\mu}_{i}^{j}    =\mathcal{M}_{i}\left(  x_{i-1}^{j-1}\right)  +\mu
_{i}-\mathcal{M}_{i}^{\prime}\left(  x_{i-1}^{j-1}\right)  x_{i-1}^{j-1},\\
\tilde{H}_{i}^{j}  &  =\mathcal{\tilde{H}}_{i}^{\prime}\left(  x_{i}%
^{j-1}\right)  ,\quad
\tilde{y}_{i}^{j}    =\tilde{y}_{i}+\mathcal{\tilde{H}}_{i}^{\prime}\left(
x_{i}^{j-1}\right)  x_{i}^{j-1}-\mathcal{\tilde{H}}_{i}\left(  x_{i}%
^{j-1}\right)  .
\end{align*}

\end{lemma}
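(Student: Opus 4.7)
The plan is to recognize the linearized LM subproblem (\ref{eq:tangent-ls}) as the negative log posterior of a linear Gaussian state-space model, and then invoke Lemma \ref{lem:KS} applied to that model to identify the minimizer with the Kalman smoother mean. The work then reduces to (i) encoding the LM penalty as an extra Gaussian pseudo-observation, and (ii) a direct algebraic rewriting that converts the ``linearized around $x_{0:k}^{j-1}$'' form (\ref{eq:increment-background})--(\ref{eq:joint_obs_gamma}) into the strictly affine form (\ref{eq:increment-background-equiv})--(\ref{eq:joint_obs_gamma-equiv}).

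First, I would handle the penalty term $\gamma\lvert x_i-x_i^{j-1}\rvert^2$, $i=1,\dots,k$. Writing it as $\lvert x_i-x_i^{j-1}\rvert_{(\gamma^{-1}I_m)^{-1}}^{2}$ shows it is exactly the negative log density (up to an additive constant) of a Gaussian observation $x_i^{j-1}=x_i+E_i$ with $E_i\sim N(0,\gamma^{-1}I_m)$. Since the true observation $y_i$ and the pseudo-observation $x_i^{j-1}$ are independent, stacking them yields the joint observation $\tilde y_i$, observation operator $\mathcal{\tilde H}_i$, and block-diagonal covariance $\tilde R_i$ given in (\ref{eq:joint_obs_gamma_def}). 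Consequently the sum of the $R_i^{-1}$-weighted observation term and the $\gamma\lvert\cdot\rvert^2$ penalty in (\ref{eq:tangent-ls}) is exactly $\lvert\tilde y_i-\mathcal{\tilde H}_i(x_i^{j-1})-\mathcal{\tilde H}_i^\prime(x_i^{j-1})(x_i-x_i^{j-1})\rvert_{\tilde R_i^{-1}}^{2}$.

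Next, I would match (\ref{eq:tangent-ls}) against the least squares representation of the Kalman smoother mean given in Lemma \ref{lem:KS}. The background term $\lvert x_0-x_b\rvert_{B^{-1}}^2$ corresponds to $X_0^j\sim N(x_b,B)$, i.e.\ (\ref{eq:increment-background}). The model term $\lvert x_i-\mathcal{M}_i(x_{i-1}^{j-1})-\mathcal{M}_i^\prime(x_{i-1}^{j-1})(x_{i-1}-x_{i-1}^{j-1})-\mu_i\rvert_{Q_i^{-1}}^2$ is precisely the negative log density of the Gaussian transition (\ref{eq:time_evol_j}), and by the previous paragraph the combined observation/penalty term is the negative log density of (\ref{eq:joint_obs_gamma}). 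Thus (\ref{eq:tangent-ls}) is the Kalman-smoother least squares problem for the system (\ref{eq:increment-background})--(\ref{eq:joint_obs_gamma}), so Lemma \ref{lem:KS} gives $x_{0:k}^{j}=E(X_{0:k|k}^{j})$, which is the asserted identity.

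Finally, I would verify the equivalence of the two formulations by pure algebra. Expanding (\ref{eq:time_evol_j}) and collecting the deterministic terms that do not involve $X_{i-1}^j$ yields
\[
X_i^j = \mathcal{M}_i^\prime(x_{i-1}^{j-1})X_{i-1}^j + \bigl[\mathcal{M}_i(x_{i-1}^{j-1})+\mu_i-\mathcal{M}_i^\prime(x_{i-1}^{j-1})x_{i-1}^{j-1}\bigr] + V_i^j,
\]
which is (\ref{eq:time_evol_j-equiv}) with the stated $M_i^j$ and $\tilde\mu_i^j$. Similarly, moving all terms of (\ref{eq:joint_obs_gamma}) not involving $X_i^j$ to the left-hand side yields $\tilde y_i+\mathcal{\tilde H}_i^\prime(x_i^{j-1})x_i^{j-1}-\mathcal{\tilde H}_i(x_i^{j-1})=\mathcal{\tilde H}_i^\prime(x_i^{j-1})X_i^j+\tilde W_i^j$, which is (\ref{eq:joint_obs_gamma-equiv}). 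Conceptually nothing is hard here; the only place one must pay attention is keeping the dimensions and the block structure of $\tilde R_i$ straight when combining the two sources of observation information, so that $\gamma$ enters only on the pseudo-observation block and not on the physical-observation block.
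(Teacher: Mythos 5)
Your proposal is correct and follows essentially the same route as the paper: the paper's one-line proof likewise observes that, once the penalty is encoded as the Gaussian pseudo-observation $x_i^{j-1}=x_i+E_i$ with $E_i\sim N(0,\gamma^{-1}I_m)$ (stated just before the lemma), the system (\ref{eq:increment-background})--(\ref{eq:joint_obs_gamma}) is a linear Gaussian state-space model to which Lemma \ref{lem:KS} applies. You simply spell out the term-by-term matching and the affine rewriting that the paper leaves implicit.
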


\begin{proof}
The system (\ref{eq:increment-background})--(\ref{eq:joint_obs_gamma}) has the
same form as the original problem (\ref{eq:filter_background_eq}%
)--(\ref{eq:filter_observation_eq}) and all distributions are Gaussian, hence
Lemma \ref{lem:KS} applies.
\end{proof}

\begin{corollary}
\label{cor:LM-KS} The LM iterate $x^{j}$ is the mean found from the Kalman
smoother (\ref{eq:smoother_model})--(\ref{eq:smoother_obs_op}), applied to the
linear stochastic system (\ref{eq:increment-background}%
)--(\ref{eq:joint_obs_gamma}).
\end{corollary}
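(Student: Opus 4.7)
The plan is to combine Lemma \ref{lem:LM-KS} with Lemma \ref{lem:KS} and the equivalence of the two formulations given inside Lemma \ref{lem:LM-KS} itself.

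First, by Lemma \ref{lem:LM-KS}, the LM iterate $x_{0:k}^{j}$ equals the mean $E(X_{0:k|k}^{j})$ of the smoothing distribution of the stochastic system (\ref{eq:increment-background})--(\ref{eq:joint_obs_gamma}). I would then observe that the equivalent rewritten system (\ref{eq:increment-background-equiv})--(\ref{eq:joint_obs_gamma-equiv}) has exactly the structural form of the linear Gaussian state-space model (\ref{eq:filter_background_eq})--(\ref{eq:filter_observation_eq}) used to set up the Kalman smoother in Section \ref{sec:smoother}, namely a Gaussian initial state, a linear state transition with deterministic drift and additive Gaussian noise, and a linear observation with additive Gaussian noise, all mutually independent. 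Concretely, the role of $M_i$ is played by $M_i^j = \mathcal{M}_i'(x_{i-1}^{j-1})$, the role of $\mu_i$ by $\tilde{\mu}_i^j$, the role of $H_i$ by $\tilde{H}_i^j$, the role of $R_i$ by $\tilde{R}_i$, and the role of $y_i$ by $\tilde{y}_i^j$.

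Next, the equivalence between (\ref{eq:increment-background})--(\ref{eq:joint_obs_gamma}) and (\ref{eq:increment-background-equiv})--(\ref{eq:joint_obs_gamma-equiv}) is a direct algebraic check: rewriting
\[
\mathcal{M}_i(x_{i-1}^{j-1}) + \mathcal{M}_i'(x_{i-1}^{j-1})(X_{i-1}^{j}-x_{i-1}^{j-1}) = M_i^j X_{i-1}^{j} + \tilde{\mu}_i^j,
\]
and analogously moving the constant terms in (\ref{eq:joint_obs_gamma}) into $\tilde{y}_i^j$, shows that the two systems describe the same joint law of $(X_{0:k}^{j}, \tilde{y}_{1:k})$. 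Hence they have the same smoothing distribution and in particular the same smoothing mean.

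Applying Lemma \ref{lem:KS} to the linear Gaussian system (\ref{eq:increment-background-equiv})--(\ref{eq:joint_obs_gamma-equiv}), the mean $E(X_{0:k|k}^j)$ is exactly what is produced by the Kalman smoother (\ref{eq:smoother_model})--(\ref{eq:smoother_obs_op}) with $(M_i,\mu_i,H_i,R_i,y_i)$ replaced by $(M_i^j,\tilde{\mu}_i^j,\tilde{H}_i^j,\tilde{R}_i,\tilde{y}_i^j)$. Combining this with the identification $x_{0:k}^j = E(X_{0:k|k}^j)$ from Lemma \ref{lem:LM-KS} yields the corollary. The only nontrivial bookkeeping, and thus the main place where care is needed, is checking that the constant shifts absorbed into $\tilde{\mu}_i^j$ and $\tilde{y}_i^j$ do not alter the distribution, but this is immediate because translation by a deterministic vector commutes with conditioning on the $\tilde{y}_i$'s.
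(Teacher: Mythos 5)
Your proposal is correct and follows essentially the same route as the paper: the corollary is an immediate consequence of Lemma \ref{lem:LM-KS} (identifying $x^{j}$ with the smoothing mean of the linearized system) together with Lemma \ref{lem:KS} (the Kalman smoother produces the smoothing distribution of a linear Gaussian system), which is exactly the chain you spell out. The paper leaves this chain implicit, and the algebraic equivalence with the rewritten system (\ref{eq:increment-background-equiv})--(\ref{eq:joint_obs_gamma-equiv}) that you verify is already packaged inside the statement of Lemma \ref{lem:LM-KS}.
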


However, since the dimension of the state is generally large, we apply the
EnKS (\ref{eq:EnKS-background})--(\ref{eq:sample_mean_EnKS}) to solve
(\ref{eq:increment-background})-(\ref{eq:joint_obs_gamma}) approximately. In
each LM iteration $j=1,2,\ldots$, the linearized least squares solution
$x^{j}$ is approximated by the sample mean $\bar{X}_{0:k|k}^{j,N_{j}}$ from
the EnKS and the least squares are linearized at the previous iterate $\bar
{X}_{0:k|k}^{j-1,N_{j-1}}$ rather than at $x^{j-1}$. However, for $j=0$ this
notation is formal for the sake of consistency only. There is no ensemble for
$j=0.$

\begin{algorithm}
\label{alg:LM-EnKS-WD} Given an initial approximation $x_{0:k}^{0}$, 
 and $\gamma\geq0$. Initialize%
\[
\tilde{x}^{j}=\bar{X}_{0:k|k}^{j,N_{j}}=x_{0:k}^{0}\quad\text{for }j=0.
\]

LM loop: For $j=1,2,\ldots$. Choose an ensemble size $N_j$.

EnKS loop: For $i=0$, the ensemble $\left[  X_{0|0}^{j,n}\right]
_{n=1}^{N_{j}}$ consists of i.i.d. Gaussian random variables
\[
X_{0|0}^{j,n}\sim N\left(  \tilde{x}_{0}^{0},B\right)  .
\]

For $i=1,\ldots,k$, advance the model in time (the forecast step) by%
\begin{align}
X_{i|i-1}^{j,n}  &  =\mathcal{M}_{i}^{\prime}\left(  \bar{X}_{i-1|k}%
^{j-1,N_{j-1}}\right)  \left(  X_{i-1|i-1}^{j,n}-\bar{X}_{i-1|k}^{j-1,N_{j-1}%
}\right)  +\mathcal{M}_{i}\left(  \bar{X}_{i-1|k}^{j-1,N_{j-1}}\right)  +\mu
_{i}+V_{i}^{j,n},\label{eq:LM-EnKS-WD-adv}\\
V_{i}^{j,n}  &  \sim N\left(  0,{Q}_{i}\right)  ,\quad n=1,\ldots
,N_{j}.\nonumber
\end{align}
Incorporate the observations at time $i$ into the ensemble of composite states
$\left[  X_{0:i|i-1}^{j,n}\right]  _{n=1}^{N_{j}}$ in the same way as in the
EnKF analysis step,%
\begin{align}
X_{0:i|i}^{j,n}=  &  X_{0:i|i-1}^{j,n}+{P}_{0:i|i-1}^{j,N_{j}}\tilde{H}%
_{i}^{j\mathrm{T}}\left(  \tilde{H}_{i}^{j}{P}_{0:i|i-1}^{j,N_{j}}\tilde
{H}_{i}^{j\mathrm{T}}+\tilde{R}_{i}\right)  ^{-1}\label{eq:LM-EnKS-WD-ana}\\
&  \cdot\left(  \tilde{y}_{i}-\tilde{W}_{i}^{j,n}-\mathcal{\tilde{H}}_{i}\left(
\bar{X}_{i|k}^{j-1,N_{j-1}}\right)  -\mathcal{\tilde{H}}_{i}^{^{\prime}}\left(
\bar{X}_{i|k}^{j-1,N_{j-1}}\right)  \left(  X_{i|i-1}^{j,n}-\bar{X}%
_{i|k}^{j-1,N_{j-1}}\right)  \right)  ,\nonumber\\
\tilde{W}_{i}^{j,n}\sim &  N\left(  0,\tilde{R}_{i}\right) \nonumber
\end{align}
where ${P}_{0:i|i-1}^{j,N_{j}}$ is the sample covariance from the ensemble
$\left[  X_{0:i|i-1}^{j,n}\right]  _{n=1}^{N_{j}}$. Similarly as in
(\ref{eq:PHt})--(\ref{eq:h_n}), only the following matrix-vector products are
needed:
\begin{align*}
{P}_{0:i|i-1}^{j,N_{j}}\tilde{H}_{i}^{j\mathrm{T}}  &  =\frac{1}{N_{j}-1}%
\sum_{n=1}^{N_{j}}\left(  X_{0:i|i-1}^{j,n}-\bar{X}_{0:i|i-1}^{j,N_{j}%
}\right)  \left(  X_{i|i-1}^{j,n}-\bar{X}_{i|i-1}^{j,N_{j}}\right)^{\mathrm{T}}  \tilde
{H}_{i}^{j\mathrm{T}}\\
&  =\frac{1}{N_{j}-1}\sum_{n=1}^{N_{j}}\left(  X_{0:i|i-1}^{j,n}-\bar
{X}_{0:i|i-1}^{j,N_{j}}\right)  h_{i}^{j,n\mathrm{T}},\\
\tilde{H}_{i}^{j}{P}_{0:i|i-1}^{j,N_{j}}\tilde{H}_{i}^{j\mathrm{T}}  &
=\frac{1}{N_{j}-1}\sum_{n=1}^{N_{j}}\tilde{H}_{i}^{j}\left(  X_{i|i-1}%
^{j,n}-\bar{X}_{i|i-1}^{j,N_{j}}\right)  \left(  X_{i|i-1}^{j,n}-\bar
{X}_{i|i-1}^{j,N_{j}}\right)^{\mathrm{T}}  \tilde{H}_{i}^{j\mathrm{T}}\\
&  =\frac{1}{N_{j}-1}\sum_{n=1}^{N_{j}}h_{i}^{j,n}h_{i}^{j,n\mathrm{T}},
\end{align*}
where
\begin{equation}
h_{i}^{j,n}=\tilde{H}_{i}^{j}\left(  X_{i|i-1}^{j,n}-\bar{X}_{i|i-1}^{j,N_{j}%
}\right)  =\mathcal{\tilde{H}}_{i}^{\prime}\left(  \bar{X}_{i|k}^{j-1,N_{j-1}%
}\right)  \left(  X_{i|i-1}^{j,n}-\bar{X}_{i|i-1}^{j,N_{j}}\right)
\label{eq:LM-EnKS-WD-h}%
\end{equation}
and
\[
\bar{X}_{i|i-1}^{j,N_{j}}=\frac{1}{N_{j}}\sum_{n=1}^{N_{j}}X_{i|i-1}%
^{j,n},\quad\bar{X}_{0:i|i-1}^{j,N_{j}}=\frac{1}{N_{j}}\sum_{n=1}^{N_{j}%
}X_{0:i|i-1}^{j,n}.
\]

The next iterate is $\tilde{x}^{j}=\bar{X}_{0:k|k}^{j,N_{j}}$.
\end{algorithm}

In the rest of this section, we study the asymptotic behavior of Algorithm
\ref{alg:LM-EnKS-WD} when the ensembles sizes $N_1,\ldots N_{j}\rightarrow\infty$. We
start with an a-priori $L^{p}$ bound on the ensemble members, independent of
the ensemble size.

\begin{assumption}
\label{ass:M_H} The model and observation operators, $\mathcal{M}_{i}$, and
$\mathcal{H}_{i}$ are continuously differentiable, with at most polynomial
growth at infinity, and their Jacobians have at most polynomial growth at
infinity,
i.e. there exists $\kappa>0$ and $s\geq0$, such that $\left\vert
\mathcal{M}_{i}(x)\right\vert \leq\kappa(1+\left\vert x\right\vert ^{s})$,
$\left\vert \mathcal{M}_{i}^{\prime}(x)\right\vert \leq\kappa(1+\left\vert
x\right\vert ^{s})$, $\left\vert \mathcal{H}_{i}(x)\right\vert \leq
\kappa(1+\left\vert x\right\vert ^{s})$, and $\left\vert \mathcal{H}%
_{i}^{\prime}(x)\right\vert \leq\kappa(1+\left\vert x\right\vert ^{s})$ for
all $i$ and all $x$.
\end{assumption}

Since we are interested in the convergence with the ensemble size, we need a
notation to distinguish between $X_{0:k|k}^{j,n}$ coming from ensembles of
different sizes $N_{j}$. Thus, when we need to make such distinction, we
denote by $X_{0:k|k}^{j,n,N_{j}}$ the $n$-th ensemble member from the ensemble
$\left[  X_{0:k|k}^{j,1},\ldots,X_{0:k|k}^{j,N_{j}}\right]  $ of size $N_{j}$
in Algorithm \ref{alg:LM-EnKS-WD}, and similarly for other subscripts and superscripts.

\begin{lemma}
\label{lem:boundness_X} For any $1\leq p<\infty$, any $i=0,\ldots,k$ and any
$j=1,2,\ldots$, there exists a constant $C\left(  i,j,p\right)  $ such that in
Algorithm \ref{alg:LM-EnKS-WD},
\begin{equation}
\left\Vert {X}_{0:i|i}^{j,n,N_{j}}\right\Vert _{p}\leq C\left(  i,j,p\right)
\label{eq:boundness_X}%
\end{equation}
for all $n=1,\ldots,N_{j}$ and all $N_{j}.$
\end{lemma}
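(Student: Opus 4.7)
The plan is to prove this by a double induction: an outer induction on the LM iteration index $j$, and, for each fixed $j$, an inner induction on the time index $i$. At the outer base case $j=0$, the ``ensemble'' is just the deterministic initialization $\tilde{x}^{0}=x_{0:k}^{0}$, so there is nothing to show. For the outer step, I assume that the ensemble members $X_{0:i|i}^{j-1,n,N_{j-1}}$ are bounded in $L^{p}$ for every $p\in[1,\infty)$ and every $i$, uniformly in $n$ and $N_{j-1}$. The first consequence I draw is that the sample mean $\bar{X}_{i|k}^{j-1,N_{j-1}}$ inherits the same bound: by exchangeability all ensemble members have the same $L^{p}$-norm, so the triangle inequality gives $\|\bar{X}_{i|k}^{j-1,N_{j-1}}\|_{p}\leq\|X_{i|k}^{j-1,1,N_{j-1}}\|_{p}$.

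For the inner induction on $i$, the base $i=0$ is immediate, since $X_{0|0}^{j,n}\sim N(\tilde{x}_{0}^{0},B)$ is Gaussian with deterministic parameters and so has every moment finite, uniformly in $n$ and $N_{j}$. For the inner step, assume $X_{0:i-1|i-1}^{j,n,N_{j}}$ is in every $L^{p}$ with bounds independent of $N_{j}$, and examine the forecast formula (\ref{eq:LM-EnKS-WD-adv}) and the analysis formula (\ref{eq:LM-EnKS-WD-ana}) in turn. In the forecast step, Assumption \ref{ass:M_H} controls $\mathcal{M}_{i}(\bar{X}_{i-1|k}^{j-1,N_{j-1}})$ and $\mathcal{M}_{i}^{\prime}(\bar{X}_{i-1|k}^{j-1,N_{j-1}})$ by polynomials in $|\bar{X}_{i-1|k}^{j-1,N_{j-1}}|$, which by the outer hypothesis lie in every $L^{p}$. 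A Hölder inequality then turns the product $\mathcal{M}_{i}^{\prime}(\bar{X}_{i-1|k}^{j-1,N_{j-1}})(X_{i-1|i-1}^{j,n}-\bar{X}_{i-1|k}^{j-1,N_{j-1}})$ into an $L^{p}$-bounded quantity, provided the inner inductive bound is applied at some higher exponent; the Gaussian term $V_{i}^{j,n}$ contributes nothing delicate.

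The analysis step is the more subtle one, and this is where I expect the main obstacle. Three quantities appearing in (\ref{eq:LM-EnKS-WD-ana}) must be controlled: the inverse, the sample covariance factor $P_{0:i|i-1}^{j,N_{j}}\tilde{H}_{i}^{jT}$, and the innovation-like bracket. The inverse is handled by a deterministic bound: since $\tilde{R}_{i}$ is block-diagonal with blocks $R_{i}$ and $\gamma^{-1}I_{m}$, it is strictly positive definite, so $\tilde{H}_{i}^{j}P_{0:i|i-1}^{j,N_{j}}\tilde{H}_{i}^{jT}+\tilde{R}_{i}\succeq\tilde{R}_{i}$, and the spectral norm of the inverse is at most $|\tilde{R}_{i}^{-1}|$, a constant. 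For the sample covariance, I write it as a sum of $N_{j}$ rank-one outer products divided by $N_{j}-1$; applying the power-mean inequality $(\sum_{n}a_{n})^{p}\leq N_{j}^{p-1}\sum_{n}a_{n}^{p}$ and then using exchangeability (all summands have the same $L^{p}$-distribution) yields
\[
\|P_{0:i|i-1}^{j,N_{j}}\|_{p}\leq\frac{N_{j}}{N_{j}-1}\bigl\|X_{0:i|i-1}^{j,1,N_{j}}-\bar{X}_{0:i|i-1}^{j,N_{j}}\bigr\|_{2p}^{2},
\]
which is bounded independently of $N_{j}$ by the inner hypothesis (the factor $N_{j}/(N_{j}-1)$ is bounded for $N_{j}\geq 2$). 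The factor $\tilde{H}_{i}^{j}=\mathcal{\tilde{H}}_{i}^{\prime}(\bar{X}_{i|k}^{j-1,N_{j-1}})$ is controlled by Assumption \ref{ass:M_H} together with the outer hypothesis, and the innovation bracket by the same combination plus the inner hypothesis on $X_{i|i-1}^{j,n}$.

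Assembling these ingredients via Hölder's inequality gives the desired $L^{p}$ bound on $X_{0:i|i}^{j,n,N_{j}}$. The main bookkeeping obstacle is that every product of random factors forces me to bound the factors in proportionally higher $L^{p}$-norms; however, since both induction hypotheses deliver boundedness in \emph{every} finite $L^{p}$ simultaneously, any finite chain of such exponent inflations is harmless, and the resulting constant $C(i,j,p)$ depends only on $i$, $j$, $p$, the polynomial growth parameters $(\kappa,s)$, the problem data $(B,Q_{i},R_{i},\gamma,\mu_{i},y_{i})$ and the constants coming from the outer hypothesis.
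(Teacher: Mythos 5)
Your proposal is correct and follows essentially the same route as the paper: a double induction on the LM iteration $j$ and the time index $i$, with the sample mean of the previous iterate controlled via exchangeability, the inverse in the analysis step bounded deterministically by $\left\vert \tilde{R}_{i}^{-1}\right\vert$ using positive semi-definiteness of the sample covariance, and the remaining products handled by H\"older with exponent inflation absorbed by the all-$p$ induction hypotheses. The only cosmetic difference is that you derive the bound $\Vert P_{0:i|i-1}^{j,N_{j}}\Vert_{p}\leq\frac{N_{j}}{N_{j}-1}\Vert X_{0:i|i-1}^{j,1}-\bar{X}_{0:i|i-1}^{j,N_{j}}\Vert_{2p}^{2}$ directly from the triangle inequality and exchangeability, whereas the paper cites the analogous estimate from \cite[lemma 31]{Mandel-2011-CEK}.
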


\begin{proof}
Let $p\in\lbrack1,\infty)$.
We will prove (\ref{eq:boundness_X}) by induction on the iteration number $j$. For
$j=1$, $\tilde{x}^{j-1}$ is constant, otherwise, for $j\geq2$, $\left\Vert
\tilde{x}^{j-1}\right\Vert _{p}$ is bounded independently of the ensemble
sizes by induction assumption because
\[
\tilde{x}^{j-1}=\bar{X}_{0:k|k}^{j-1,N_{j-1}}=\frac{1}{N_{j-1}}\sum
_{n=1}^{N_{j-1}}X_{0:k|k}^{j-1,n,N_{j-1}}.
\]
For a fixed $j$, we now proceed by induction on the time step $i$. For $i=0$,
$X_{0|0}^{j,n}\sim N\left(  x_{0}^{0},B\right)  $, thus $\left\Vert
X_{0|0}^{j,n}\right\Vert _{p}$ does not depend on $n$ or $N_{j}$. For
$i=1,\ldots,k$, from (\ref{eq:LM-EnKS-WD-adv}), we have%
\[
\left\Vert {X}_{i|i-1}^{j,n}\right\Vert _{p}\leq\left\Vert \mathcal{M}%
_{i}^{\prime}(\tilde{x}_{i-1}^{j-1})\right\Vert _{2p}\left(  \left\Vert
X_{i-1|i-1}^{j,n}\right\Vert _{2p}+\left\Vert \tilde{x}_{i-1}^{j-1}\right\Vert
_{2p}\right)  +\left\Vert \mathcal{M}_{i}(\tilde{x}_{i-1}^{j-1})\right\Vert
_{p}+\left\vert \mu_{i}\right\vert +\left\Vert V_{i}^{j,n}\right\Vert _{p}.
\]
From Assumption \ref{ass:M_H} and the fact that $V_{i}^{j,n}$ is normally
distributed, there exist a constant $C_{p}$ such that
\begin{align*}
\left\Vert {X}_{i|i-1}^{j,n}\right\Vert _{p}  &  \leq\kappa C_{p}\left(
1+\left\Vert \tilde{x}_{i-1}^{j-1}\right\Vert _{2ps}^{s}\right)  \left(
\left\Vert X_{i-1|i-1}^{j,n}\right\Vert _{2p}+\left\Vert \tilde{x}_{i-1}%
^{j-1}\right\Vert _{2p}\right) \\
&  +\kappa C_{p}\left(  1+\left\Vert \tilde{x}_{i-1}^{j-1}\right\Vert
_{ps}^{s}\right)  +\left\vert \mu_{i}\right\vert +C_{p}.
\end{align*}
Bounding $\tilde{x}_{i-1}^{j-1}$ by the induction assumption on $j$ and
$X_{i-1|i-1}^{j,n}$ by the induction assumption on $i$, we have that
$\left\Vert {X}_{0:i|i-1}^{j,n,N_{j}}\right\Vert _{p}$ is bounded
independently of $n$ and $N_{j}$. From equation (\ref{eq:LM-EnKS-WD-ana}), and
the fact that $\tilde{H}_{i}^{j}=\mathcal{\tilde{H}}_{i}^{\prime}\left(
\tilde{x}_{i}^{j-1}\right)  =\left[  0,\ldots,\mathcal{{H}}_{i}^{\prime
}\left(  \tilde{x}_{i}^{j-1}\right)  \right]  $ we conclude that
\begin{align*}
\left\Vert {X}_{0:i|i}^{j,n}\right\Vert _{p}\leq &  \left\Vert {X}%
_{0:i|i-1}^{j,n}\right\Vert _{p}+\left\Vert {P}_{0:i|i-1}^{j,N_{j}}%
\tilde{\mathcal{H}}_{i}^{\prime\mathrm{T}}\left(  \tilde{x}_{i}^{j-1}\right)
\left(  \tilde{\mathcal{H}}_{i}^{\prime}\left(  \tilde{x}_{i}^{j-1}\right)
{P}_{0:i|i-1}^{j,N_{j}}\tilde{\mathcal{H}}_{i}^{\prime\mathrm{T}}\left(
\tilde{x}_{i}^{j-1}\right)  +\tilde{R}_{i}\right)  ^{-1}\right\Vert _{2p}\\
&  .\left\Vert \tilde{y}_{i}-\tilde{W}_{i}^{j,n}-\mathcal{H}_{i}\left(  \tilde{x}%
_{i}^{j-1}\right)  -\tilde{\mathcal{H}}_{i}^{\prime}\left(  \tilde{x}%
_{i}^{j-1}\right)  \left(  X_{i|i-1}^{j,n}-\tilde{x}_{i}^{j-1}\right)
\right\Vert _{2p},\\
\leq &  \left\Vert {X}_{0:i|i-1}^{j,n}\right\Vert _{p}+\left\Vert
{P}_{0:i|i-1}^{j,N_{j}}\right\Vert _{8p}\left\Vert \tilde{\mathcal{H}}%
_{i}^{\prime\mathrm{T}}\left(  \tilde{x}_{i}^{j-1}\right)  \right\Vert _{8p}\\
&  \cdot\left\Vert \left(  \tilde{\mathcal{H}}_{i}^{\prime}\left(  \tilde
{x}_{i}^{j-1}\right)  {P}_{0:i|i-1}^{j,N_{j}}\tilde{\mathcal{H}}_{i}%
^{\prime\mathrm{T}}\left(  \tilde{x}_{i}^{j-1}\right)  +\tilde{R}_{i}\right)
^{-1}\right\Vert _{4p}\\
&  .\left( \left\vert \tilde{y}_{i}\right\vert +\left\Vert \tilde{W}_{i}^{j,n}\right\Vert
_{2p}+\left\Vert \mathcal{H}_{i}\left(  \tilde{x}_{i}^{j-1}\right)
\right\Vert _{2p} +\left\Vert \tilde{\mathcal{H}}_{i}^{\prime}\left(  \tilde{x}_{i}%
^{j-1}\right)  \right\Vert _{4p}\left(  \left\Vert X_{i|i-1}^{j,n}\right\Vert
_{4p}+\left\Vert \tilde{x}_{i}^{j-1}\right\Vert _{4p}\right) \right) .
\end{align*}
Since $\tilde{R}_{i}$ is positive definite and $P_{0:i|i-1}^{j,N_{j}}$ is positive
semi definite, we have%
\begin{equation}
\left\Vert \left(  \tilde{\mathcal{H}}_{i}^{\prime}\left(  \tilde{x}_{i}%
^{j-1}\right)  {P}_{0:i|i-1}^{j,N_{j}}\tilde{\mathcal{H}}_{i}^{\prime
\mathrm{T}}\left(  \tilde{x}_{i}^{j-1}\right)  +\tilde{R}_{i}\right)  ^{-1}%
\right\Vert _{4p}\leq\left| \tilde{R}_{i}^{-1}\right| . \label{eq:kg-bound}%
\end{equation}
From \cite[lemma 31]{Mandel-2011-CEK} we have
\begin{equation}
\left\Vert P_{0:i|i-1}^{j,N_{j}}\right\Vert _{8p}\leq2\left\Vert
X_{0:i|i-1}^{j,1}\right\Vert _{16p}^{2}. \label{eq:cov-bound}%
\end{equation}
From the inequalities (\ref{eq:kg-bound}) and (\ref{eq:cov-bound}), Assumption
\ref{ass:M_H}, and the fact that $\tilde{W}_{i}^{j,n}$ is normally distributed, there
exists a constant $\tilde{C}_{p}$ such that
\begin{align*}
\left\Vert {X}_{0:i|i}^{j,n}\right\Vert _{p}\leq &  \left\Vert {X}%
_{0:i|i-1}^{j,n}\right\Vert _{p}+2\left\Vert {X}_{0:i|i-1}^{j,1}\right\Vert
_{16p}^{2}\kappa\tilde{C}_{p}\left(  1+\left\Vert \tilde{x}_{i}^{j-1}%
\right\Vert _{8ps}^{s}\right) \\
&  \left\vert \tilde{R}_{i}^{-1}\right\vert \left\vert \tilde{y}_{i}\right\vert
+\tilde{C}_{p}+\kappa\tilde{C}_{p}\left(1+\left\Vert \tilde{x}_{i}^{j-1}\right\Vert
_{2ps}^{s}\right)\\
&  +\kappa\tilde{C}_p\left(  1+\left\Vert \tilde{x}_{i}^{j-1}\right\Vert
_{4ps}^{s}\right)  \left(  \left\Vert X_{i|i-1}^{j,1}\right\Vert
_{4p}+\left\Vert \tilde{x}_{i}^{j-1}\right\Vert _{4p}\right)
\end{align*}
Bounding $\tilde{x}_{i}^{j-1}$ by the induction assumption on $j$ and
$X_{i-1|i-1}^{j,n}$ by the induction assumption on $i$, we obtain that
$\left\Vert {X}_{0:i|i}^{j,n,N_{j}}\right\Vert _{p}$ is bounded independently
of $n$ and $N_{j}$.
\end{proof}

At each iteration $j$ of Algorithm \ref{alg:LM-EnKS-WD}, we define for
theoretical purposes a reference ensemble $\left[  U_{0:k|k}^{1},\ldots,U_{0:k|k}^{N_{j}%
}\right]  $, similarly as in Section \ref{sec:EnKS_convergence}, and with the
derivatives taken at the mean, rather than sample mean as in Algorithm
\ref{alg:LM-EnKS-WD}: For $j=0$, all $U_{i|i}^{j,n}=X_{i|i}^{j,n}=x^{0}$ are
constants. For $j=1,2,\ldots,$ $U_{0|0}^{j,n}=X_{0|0}^{j,n}$ for $i=0,$ and for
$i=1,\ldots,k$,
\begin{align}
U_{i|i-1}^{j,n}  &  =\mathcal{M}_{i}^{\prime}\left(  E\left(  U_{i-1|i-1}%
^{j-1,1}\right)  \right)  \left(  U_{i-1|i-1}^{j,n}-x_{i-1}^{j-1}\right)
+\mathcal{M}_{i}^{\prime}\left(  E\left(  U_{i-1}^{j-1,1}\right)  \right)
+\mu_{i}+V_{i}^{j,n},\label{eq:def-Uk-adv-KS}\\
U_{0:i|i}^{j,n}  &  =
{U}_{0:i|i-1}^{j,n}+Q_{0:i}^{j}\tilde{\mathcal{H}}_{i}^{\prime}\left(E\left(U_{i|k}^{j-1,1}\right)\right)^{\mathrm{T}}
\left( \mathcal{H}_{i}^{\prime}\left(E\left(U_{i|k}^{j-1,1}\right)\right)Q_{i}%
^{j}\mathcal{H}_{i}^{\prime}\left(  E\left(  U_{i|k}^{j-1,1}\right)  \right)
^{\mathrm{T}}+\tilde{R}_{i}\right)  ^{-1} \nonumber \\
&
\left(\hat{y}_{i}+\tilde{\mathcal{H}}_{i}^{\prime}\left(  E\left(
{U}_{i|k}^{j-1,1}\right)  \right)\left(  E\left(  {U}_{i|k}^{j-1,1}\right) - {U}_{i|i-1}^{j,1} \right)
-\mathcal{H}_{i}\left(  E\left(  {U}_{i|i}^{j-1,1}\right)  \right)- \tilde{W}_{i}^{j,1} \right)
 \label{eq:def-Uk-ana-KS}%
\end{align}
where
 $Q_{0:i}^{j}=\operatorname{Cov}\left(  U_{0:i|i-1}^{j-1,1}\right)  $ and $\hat{y}_{i} = \left[
\begin{array}
[c]{c}%
y_{i}\\
E\left(  U_{i|k}^{j-1,1}\right)
\end{array}
\right]$. We
now show that the mean of the reference ensemble members is the solution of
the linearized least squares (\ref{eq:tangent-ls}), and thus the next LM iterate:

\begin{lemma}
\label{lem:U_eq_min_r} $E(U_{0:k|k}^{j,1})=x^{j}$, where $x^{j}$ is the $j$-th
iterate generated by the algorithm (\ref{alg:LM_D}).
\end{lemma}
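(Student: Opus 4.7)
The plan is to prove the identity by induction on the iteration index $j$, using the fact that once the derivatives in the definition of the reference ensemble are evaluated at a deterministic point, the recursion for $U^{j,n}_{0:k|k}$ coincides with the reference ensemble recursion for a Kalman smoother applied to the linearized stochastic system associated with the LM step.

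For the base case $j=0$, Algorithm \ref{alg:LM-EnKS-WD} sets all $U^{0,n}_{i|i}=x^{0}$ by construction, so $E(U^{0,1}_{0:k|k})=x^{0}$. For the inductive step, assume that $E(U^{j-1,1}_{i|k})=x^{j-1}_{i}$ for $i=0,\ldots,k$. Substituting this into the defining equations (\ref{eq:def-Uk-adv-KS})--(\ref{eq:def-Uk-ana-KS}), the Jacobians $\mathcal{M}_{i}'(E(U^{j-1,1}_{i-1|i-1}))$ and $\tilde{\mathcal{H}}_{i}'(E(U^{j-1,1}_{i|k}))$ become $\mathcal{M}_{i}'(x^{j-1}_{i-1})$ and $\tilde{\mathcal{H}}_{i}'(x^{j-1}_{i})$, and the linearization points become $x^{j-1}_{i}$. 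I would then verify, by direct comparison of the coefficients, that the recursion for $\left[U^{j,n}_{0:k|k}\right]_{n=1}^{N_{j}}$ is exactly the reference ensemble recursion from Section \ref{sec:EnKS_convergence} for the Kalman smoother applied to the linearized stochastic system (\ref{eq:increment-background})--(\ref{eq:joint_obs_gamma}) of Lemma \ref{lem:LM-KS}; in particular $Q^{j}_{0:i}$ plays the role of the exact forecast covariance $P_{0:i|i-1}$ for that system, because by the inductive replacement the increments $U^{j,n}_{0:i|i-1}$ are driven by i.i.d.\ Gaussian noise and linear operations with deterministic coefficients.

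Once this identification is made, the smoother analogue of Lemma \ref{lem:EnKF-Uk-exact} (the i.i.d.\ property of the reference ensemble, used in the proof of Theorem \ref{thm:EnKS_convergence}) guarantees that $U^{j,1}_{0:k|k}$ is distributed according to the smoothing distribution of the linearized system. By Lemma \ref{lem:KS} applied to that linearized system, the mean of this smoothing distribution is the minimizer of the associated least squares problem, which by Lemma \ref{lem:LM-KS} and Corollary \ref{cor:LM-KS} is precisely the LM iterate $x^{j}$ defined in Algorithm \ref{alg:LM_D}. Therefore $E(U^{j,1}_{0:k|k})=x^{j}$, closing the induction.

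The main obstacle I anticipate is bookkeeping rather than conceptual: carefully verifying that the apparently asymmetric definition (\ref{eq:def-Uk-ana-KS}), in which derivatives are taken at $E(U^{j-1,1}_{i|k})$ while the innovation is computed against $U^{j,1}_{i|i-1}$, really matches the reference-ensemble analysis step for the linearized system once the inductive hypothesis is used to identify $E(U^{j-1,1}_{i|k})$ with $x^{j-1}_{i}$. This is mostly a matter of unrolling definitions and using that the exact covariance of $U^{j,n}_{0:i|i-1}$ in the linearized system equals $Q^{j}_{0:i}$, together with the fact that a Gaussian distribution is determined by its mean and covariance, so the induction need only track the mean and the exact covariances.
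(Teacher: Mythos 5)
Your proposal is correct and follows essentially the same route as the paper: induction on $j$, with the inductive hypothesis forcing the reference-ensemble system to be linearized about the exact LM iterate $x^{j-1}$, after which Lemma \ref{lem:EnKS-Uk-exact} identifies the distribution of $U^{j,n}_{0:k|k}$ as the smoothing distribution of the linearized system and Lemma \ref{lem:LM-KS} identifies its mean with $x^{j}$. The additional bookkeeping you flag (matching (\ref{eq:def-Uk-ana-KS}) to the reference-ensemble analysis step and identifying $Q^{j}_{0:i}$ with the exact forecast covariance) is exactly what the paper's shorter proof leaves implicit.
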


\begin{proof}
The proof proceeds by induction on the iteration number $j$. For $j=0$ we have
$U^{0,n}=x^{0}$ for all $n$ by definition, thus $E(U^{0,1})=x^{0}$. Let
$j\geq1$. From the induction assumption, the stochastic system
(\ref{eq:def-Uk-adv-KS})--(\ref{eq:def-Uk-ana-KS}) is linearized about the
previous LM iterate $x^{j-1}$. By Lemma \ref{lem:EnKS-Uk-exact}, $\left[
U_{0:k}^{j,n}\right]  _{n=1}^{N_{j}}$ are i.i.d. with the smoothing
distribution, whose mean is the solution of the linearlized least squares
(\ref{eq:tangent-ls}).
\end{proof}

\begin{lemma}
 For all iterations $j$ and all times $i=0,\ldots,k$, the columns of the random
matrix
\begin{equation}
\left[  {X}_{0:i|i}^{j};U_{0:i|i}^{j}\right]  =\left[
\begin{array}
[c]{c}%
{X}_{0:i|i}^{j,1},\ldots,{X}_{0:i|i}^{j,N_{j}}\\
U_{0:i|i}^{j,1},\ldots,U_{0:i|i}^{j,N}%
\end{array}
\right]  \label{eq:exchange_j}%
\end{equation}
are exchangeable. 
\end{lemma}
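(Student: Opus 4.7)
The plan is to prove the exchangeability by double induction: outer induction on the LM iteration index $j$, inner induction on the time step $i$. The structure of the argument mirrors that used in Theorem \ref{thm:EnKF_convergence} (and of \cite[Theorem 1]{Mandel-2011-CEK}), with the crucial observation that the reference ensemble $U^{j,n}$ and the EnKS ensemble $X^{j,n}$ share exactly the same driving noise $V_i^{j,n}$ and $\tilde{W}_i^{j,n}$, and that the operators used to define the iteration $j$ from the iteration $j-1$ depend only on quantities (either sample means or exact means) that are symmetric in the ensemble indices of the previous iteration.

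For the outer base case $j=0$, all $X_{0:i|i}^{0,n} = U_{0:i|i}^{0,n} = x^{0}$ are deterministic, hence exchangeability is trivial. For the outer induction step, assume the claim at level $j-1$. Then $\bar{X}_{i|k}^{j-1,N_{j-1}}$ is a symmetric function of the exchangeable ensemble of the previous iteration, and $E(U_{i|k}^{j-1,1})$ is deterministic; in particular, the linearization points used in the $j$-th iteration (through $\mathcal{M}_i'$, $\mathcal{H}_i'$, $\mathcal{M}_i$, $\mathcal{H}_i$) are fixed random elements that do not depend on the new ensemble index $n$. This is the key fact that reduces the $j$-th level to an argument of the same form as Theorem \ref{thm:EnKF_convergence}.

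For the inner induction, the base case $i=0$ follows from $X_{0|0}^{j,n} = U_{0|0}^{j,n} \sim N(x_{\mathrm{b}}, B)$ being i.i.d. For the induction step, I would treat the forecast and analysis substeps separately. In the forecast step (\ref{eq:LM-EnKS-WD-adv}) and (\ref{eq:def-Uk-adv-KS}), the maps sending $X_{i-1|i-1}^{j,n}$ and $U_{i-1|i-1}^{j,n}$ to $X_{i|i-1}^{j,n}$ and $U_{i|i-1}^{j,n}$ are both applied to only the $n$-th column of the joint matrix, and the added perturbations $V_i^{j,n}$ are i.i.d.; by Lemma \ref{lem:exchangeability_sum} the enlarged ensemble $[X_{0:i|i-1}^{j,n}; U_{0:i|i-1}^{j,n}]_{n=1}^{N_j}$ inherits exchangeability. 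In the analysis step (\ref{eq:LM-EnKS-WD-ana}) and (\ref{eq:def-Uk-ana-KS}), the update of the $n$-th column of $X$ depends on all $N_j$ columns of $X_{0:i|i-1}^{j,\cdot}$ through the sample mean $\bar{X}_{0:i|i-1}^{j,N_j}$ and the sample covariance $P_{0:i|i-1}^{j,N_j}$, both of which are permutation-invariant in the ensemble index, plus on the $n$-th perturbation $\tilde{W}_i^{j,n}$; the update of $U^{j,n}$ depends only on $U_{0:i|i-1}^{j,n}$, $\tilde{W}_i^{j,n}$, and on deterministic quantities. Thus the joint map producing the $n$-th column of $[X_{0:i|i}^{j}; U_{0:i|i}^{j}]$ is a measurable function permutation-invariant in the first $N_j$ arguments of both $X$ and $U$ and of the perturbations, with the $n$-th column singled out. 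Lemma \ref{lem:exchangeability_F} then yields exchangeability at time $i$.

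The main obstacle is purely bookkeeping: the analysis map involves both the sample statistics of $X_{0:i|i-1}^{j,\cdot}$ and the deterministic statistics used for $U_{0:i|i-1}^{j,\cdot}$, and one must package the inputs carefully so as to invoke Lemma \ref{lem:exchangeability_F} on the paired ensemble rather than on $X$ and $U$ separately. Once this packaging is written out, the argument is a direct transcription of the EnKF case, with the only real novelty being the appearance of the deterministic linearization point $E(U_{i-1|k}^{j-1,1})$ in the reference update (\ref{eq:def-Uk-adv-KS})--(\ref{eq:def-Uk-ana-KS}), which cannot disturb exchangeability.
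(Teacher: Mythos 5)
Your proposal is correct and follows essentially the same route as the paper's proof: a double induction on the LM iteration $j$ and the time step $i$, with the forecast step handled by the common (index-independent) linearization point plus i.i.d.\ perturbations, and the analysis step handled by applying Lemma~\ref{lem:exchangeability_F} to the paired ensemble $\left[X_{0:i|i-1}^{j,n};U_{0:i|i-1}^{j,n}\right]$, using that the Kalman gains depend on the ensemble only through the permutation-invariant sample mean and covariance. The only cosmetic difference is that you invoke Lemma~\ref{lem:exchangeability_sum} for the additive noise in the forecast step, whereas the paper folds the perturbation into the measurable map and uses Lemma~\ref{lem:exchangeability_F} there as well.
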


\begin{proof}
We recall that for all $j\geq1$, $\tilde{x}^{j}=\bar{X}_{0:k|k}^{j,N_{j}}$. In  this proof we omit the subscripts of $N_j$ and $N_{j-1}$. 
 We use induction on the LM iteration number $j$. For $j=0$ we have that for all
$i=0,\ldots,k,$ $X_{0:i|i}^{0,n}=x_{0:i}^{0}=U_{0:i|i}^{0,n}$, and
$U_{0:i|i}^{0,n}$ are i.i.d., therefore $\left[  {X}_{0:i|i}^{0};U_{0:i|i}%
^{0}\right]  $ are exchangeable. For $j\geq1$, we use the induction on the
time index $i$. For $i=0$, $\left[  U_{0|0}^{j,n}\right]  _{n=1}^{N}$ are
i.i.d, and $X_{0|0}^{j,n}=U_{0|0}^{j,n}$, therefore $\left[  X_{0|0}%
^{j};U_{0|0}^{j}\right]  $ are exchangeable. For $i=1,\ldots,k$, consider
first the forecast step,
\begin{align*}
\left[
\begin{array}
[c]{c}%
X_{i|i-1}^{j,n}\\
U_{i|i-1}^{j,n}%
\end{array}
\right]   &  =\left[
\begin{array}
[c]{ll}%
\mathcal{M}_{i}^{\prime}\left(  \bar{X}_{i-1|k}^{j-1,N}\right)  & 0\\
0 & \mathcal{M}_{i}^{\prime}\left(  E\left(  U_{i-1|k}^{j-1,1}\right)
\right)
\end{array}
\right]  \left[
\begin{array}
[c]{c}%
X_{i-1|i-1}^{j,n}\\
U_{i-1|i-1}^{j,n}%
\end{array}
\right] \\
&  +\left[
\begin{array}
[c]{c}%
\mathcal{M}_{i}\left(  \bar{X}_{i-1|k}^{j-1,N}\right)  -\mathcal{M}%
_{i}^{\prime}\left(\bar{X}_{i-1|k}^{j-1,N}\right)\bar{X}_{i-1|k}^{j-1,N}+\mu_{i}\\
\mathcal{M}_{i}\left(E\left(U_{i-1|k}^{j-1,1}\right)\right)-\mathcal{M}_{i}^{\prime}\left(E\left(U_{i-1|k}%
^{j-1,1}\right)\right)E\left(U_{i-1|k}^{j-1,1}\right)+\mu_{i}%
\end{array}
\right]  +\left[
\begin{array}
[c]{c}%
V_{i}^{j,n}\\
V_{i}^{j,n}%
\end{array}
\right] \\
&  =F^{i}\left(  \bar{X}_{i-1|k}^{j-1,N},\left[
\begin{array}
[c]{c}%
X_{i-1|i-1}^{j,n}\\
U_{i-1|i-1}^{j,n}%
\end{array}
\right]  ,\left[
\begin{array}
[c]{c}%
V_{i}^{j,n}\\
V_{i}^{j,n}%
\end{array}
\right]  \right)  ,
\end{align*}
where $F^{i}$ is a measurable function. The ensemble sample mean $\bar
{X}_{i-1|k}^{j-1,N}$ is invariant to a permutation of ensemble members, and
$V_{i}^{j}=\left[  V_{i}^{j,1},\ldots,V_{i}^{j,N}\right]  $ is exchangeable
because its members are i.i.d. From the induction assumption on $i$ we have
that $\left[  X_{i-1|i-1}^{j};U_{i-1|i-1}^{j}\right]  $ is exchangeable, and
it is also independent from $\left[
\begin{array}
[c]{c}%
V_{i}^{j}\\
V_{i}^{j}%
\end{array}
\right]  $, therefore, using Lemma~\ref{lem:exchangeability_F}, $\left[
\begin{array}
[c]{c}%
X_{i|i-1}^{j}\\
U_{i|i-1}^{j}%
\end{array}
\right]  $ is exchangeable. The analysis step also preserves exchageability:%
\begin{align*}
\left[
\begin{array}
[c]{c}%
X_{0:i|i}^{j,n}\\
U_{0:i|i}^{j,n}%
\end{array}
\right]   &  =\left[
\begin{array}
[c]{c}%
X_{0:i|i-1}^{j,n}\\
U_{0:i|i-1}^{j,n}%
\end{array}
\right]  +\left[
\begin{array}
[c]{ll}%
K_{i}^{N} & 0\\
0 & K_{i}%
\end{array}
\right] \\
&  \left(  \left[
\begin{array}
[c]{c}%
\tilde{y}_{i}-\mathcal{H}_{i}^{\prime}\left(  \bar{X}_{i|k}^{j-1,N}\right)  \bar
{X}_{i|i}^{j-1,N}-\mathcal{H}_{i}\left(  \bar{X}_{i|k}^{j-1,N}\right)
-\tilde{W}_{i}^{j,n}\\
\hat{y}_{i}-\mathcal{H}_{i}^{\prime}\left(  E\left(  {{U}}_{i|k}^{j-1,1}\right)
\right)  E\left(  {U}_{i|k}^{j-1,1}\right)  -\mathcal{H}_{i}\left(  E\left(
{U}_{i|k}^{j-1,1}\right)  \right)  -\tilde{W}_{i}^{j,n}%
\end{array}
\right]  \right. \\
&  -\left.  \left[
\begin{array}
[c]{ll}%
\mathcal{H}_{i}^{\prime}\left(  \bar{X}_{i|k}^{j-1,N}\right)  & 0\\
0 & \mathcal{H}_{i}^{\prime}\left(  E\left(  U_{i|k}^{j-1,1}\right)  \right)
\end{array}
\right]  \left[
\begin{array}
[c]{c}%
X_{i|i-1}^{j,n}\\
U_{i|i-1}^{j,n}%
\end{array}
\right]  \right) \\
&  =F^{i}\left(  \bar{X}_{i|k}^{j-1,N},P_{0:i|i-1}^{j,N},\left[
\begin{array}
[c]{c}%
X_{i|i-1}^{j,n}\\
U_{i|i-1}^{j,n}%
\end{array}
\right]  ,\left[
\begin{array}
[c]{c}%
\tilde{W}_{i}^{j,n}\\
\tilde{W}_{i}^{j,n}%
\end{array}
\right]  \right)
\end{align*}
because the Kalman gain matrices are functions of the ensemble members through
$\bar{X}_{i|k}^{j-1,N}$ and $P_{0:i|i-1}^{j,N}$ only,
\begin{align*}
K_{i}^{N}  &  =\left[
\begin{array}
[c]{c}%
P_{0|i-1}^{j,N}\mathcal{H}_{i}^{\prime}(\bar{X}_{i|k}^{j-1,N})^{\mathrm{T}%
}\\
\vdots\\
P_{i|i-1}^{j,N}\mathcal{H}_{i}^{\prime}\left(  \bar{X}_{i|k}^{j-1,N}\right)
^{\mathrm{T}}%
\end{array}
\right]  \left(  \mathcal{H}_{i}^{\prime}\left(  \bar{X}_{i|k}^{j-1,Nj}%
\right)  P_{i|i-1}^{j,N}\mathcal{H}_{i}^{\prime}\left(  \bar{X}%
_{i|k}^{j-1,N}\right)  ^{\mathrm{T}}+\tilde{R}_{i}\right)  ^{-1},\\
K_{i}  &  =\left[
\begin{array}
[c]{c}%
Q_{0}^{j}\mathcal{H}_{i}^{\prime}\left(  E\left(  U_{i|k}^{j-1,1}\right)
\right)  ^{\mathrm{T}}\\
\vdots\\
Q_{i}^{j}\mathcal{H}_{i}^{\prime}\left(  E\left( U_{i|k}^{j-1,1}\right)
\right)  ^{\mathrm{T}}%
\end{array}
\right]  \left(  \mathcal{H}_{i}^{\prime}\left(E\left(U_{i|k}^{j-1,1}\right)\right)Q_{i}
^{j}\mathcal{H}_{i}^{\prime}\left(  E\left(  U_{i|k}^{j-1,1}\right)  \right)
^{\mathrm{T}}+\tilde{R}_{i}\right)  ^{-1}%
\end{align*}
and the ensemble sample mean $\bar{X}_{i|k}^{j-1,N}$, and the ensemble sample
covariance $P_{0:i|i-1}^{j,N}$ are invariant to a permutation of ensemble
members, $\tilde{W}_{i}^{j}=[\tilde{W}_{i}^{j,1},\ldots,\tilde{W}_{i}^{j,N}]$ is exchangeable because,
its members are i.i.d. and they are independent from $\left[  X_{i|i-1}%
^{j,n};U_{i|i-1}^{j,n}\right]  $ is exchangeable. Therefore, using again
Lemma~\ref{lem:exchangeability_F}, $\left[  X_{0:i|i}^{j,n};U_{0:i|i}%
^{j,n}\right]  $ are exchangeable.
\end{proof}

\begin{theorem}
\label{thm:conv-N} For all $j$, and $n=1,\ldots N_{j}$, ${X}_{0:k|k}%
^{j,n,N_{j}}\rightarrow U_{0:k|k}^{j,n}$ and $\bar{X}_{0:k|k}^{j,N_{j}%
}\rightarrow E\left(  U_{0:k|k}^{j,1} \right)  $ as $\min\{N_{1},\ldots
,N_{j}\}\rightarrow\infty\text{, in all }L^{p}$, $1\leq p<\infty$.

\end{theorem}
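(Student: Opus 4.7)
The plan is a double induction: outer on the LM iteration index $j$, inner on the time step $i$. The base case $j=0$ is immediate, since by construction $X_{0:i|i}^{0,n}=U_{0:i|i}^{0,n}=x_{0:i}^{0}$ for all $n$ and $i$, and these are deterministic. For the outer step, assume inductively that $\bar X_{0:k|k}^{j-1,N_{j-1}}\to E(U_{0:k|k}^{j-1,1})=x^{j-1}$ in every $L^{p}$ as $\min(N_{1},\ldots,N_{j-1})\to\infty$. In particular, restricting to any coordinate gives $\bar X_{i|k}^{j-1,N_{j-1}}\to x_{i}^{j-1}$ in every $L^{p}$.

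Inside iteration $j$, I run the inner induction on $i$. For $i=0$ we have $X_{0|0}^{j,n}=U_{0|0}^{j,n}$. Assuming $X_{0:i-1|i-1}^{j,n}\to U_{0:i-1|i-1}^{j,n}$ in $L^{p}$, for the forecast step I subtract (\ref{eq:def-Uk-adv-KS}) from (\ref{eq:LM-EnKS-WD-adv}) and express the difference as a sum of products involving the three pieces $\mathcal{M}_{i}'(\bar X_{i-1|k}^{j-1,N_{j-1}})-\mathcal{M}_{i}'(x_{i-1}^{j-1})$, $X_{i-1|i-1}^{j,n}-U_{i-1|i-1}^{j,n}$, and $\bar X_{i-1|k}^{j-1,N_{j-1}}-x_{i-1}^{j-1}$. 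Continuity of $\mathcal{M}_{i}'$ with the continuous mapping theorem gives convergence in probability; Assumption \ref{ass:M_H} (polynomial growth) together with the uniform moment bounds in Lemma \ref{lem:boundness_X} and a standard H\"older splitting provide a bound in some $L^{p'}$, $p'>p$, so the uniform integrability lemma upgrades the convergence to $L^{p}$.

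The analysis step is the heart of the argument. First, the just-proved exchangeability of the columns of $[X_{0:i|i-1}^{j};U_{0:i|i-1}^{j}]$, combined with $X_{0:i|i-1}^{j,1}\to U_{0:i|i-1}^{j,1}$ in every $L^{p}$ (from the previous sub-step) and the uniform bound of Lemma \ref{lem:boundness_X}, lets me invoke the exchangeable law-of-large-numbers argument of Theorem \ref{thm:EnKS_convergence} (i.e.\ of \cite[Theorem 1]{Mandel-2011-CEK}) to deduce
\[
\bar X_{0:i|i-1}^{j,N_{j}}\to E(U_{0:i|i-1}^{j,1}),\qquad P_{0:i|i-1}^{j,N_{j}}\to Q_{0:i}^{j},
\]
in every $L^{p}$. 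Next, the continuous mapping theorem applied to matrix inversion on the positive cone, together with the deterministic bound (\ref{eq:kg-bound}) on $(\tilde H_{i}^{j}P_{0:i|i-1}^{j,N_{j}}\tilde H_{i}^{j\mathrm{T}}+\tilde R_{i})^{-1}$ and the covariance bound (\ref{eq:cov-bound}), yields $L^{p}$ convergence of the Kalman gain in (\ref{eq:LM-EnKS-WD-ana}) to its reference counterpart in (\ref{eq:def-Uk-ana-KS}); continuity of $\mathcal{H}_{i}$ and $\mathcal{H}_{i}'$ with Assumption \ref{ass:M_H} and Lemma \ref{lem:boundness_X} treat the innovation factor analogously. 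Combining the converged factors via H\"older then gives $X_{0:i|i}^{j,n,N_{j}}\to U_{0:i|i}^{j,n}$ in $L^{p}$, closing the inner induction. The second conclusion $\bar X_{0:k|k}^{j,N_{j}}\to E(U_{0:k|k}^{j,1})$ follows at $i=k$ from the same exchangeable LLN argument applied once more to the joint matrix.

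The hardest point will be the bookkeeping that links the two inductions: the linearization coefficients at iteration $j$ are evaluated at the random sample mean produced at iteration $j-1$, so every $L^{p}$ estimate in the analysis step splits into a product of a deterministic-limit piece, an inner-induction piece, and an outer-induction piece, each of which must be controlled in a higher $L^{p'}$-norm to make H\"older close. Assumption \ref{ass:M_H} together with Lemma \ref{lem:boundness_X} is exactly what makes this possible, since both provide moment bounds of arbitrary order, allowing the uniform integrability lemma to convert the probabilistic convergence supplied by the continuous mapping theorem into $L^{p}$ convergence at every stage.
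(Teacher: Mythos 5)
Your proposal is correct and follows essentially the same route as the paper: a double induction on the LM iteration $j$ and the time step $i$, exchangeability to reduce to the first ensemble member, the continuous mapping theorem to pass to the limit in the forecast and analysis maps, and Lemma \ref{lem:boundness_X} with uniform integrability to upgrade convergence in probability to $L^{p}$. The only (cosmetic) difference is that you carry $L^{p}$ convergence through each sub-step via explicit H\"older splittings, whereas the paper works with convergence in probability throughout each step and invokes the uniform-integrability upgrade once at the end of the forecast and analysis stages.
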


\begin{proof}
We will prove that for all $j\geq0$ and all $1\leq i\leq k$, ${X}%
_{0:i|i}^{j,1,N_{j}}\rightarrow U_{0:i|i}^{j,1}$ as\newline$\min\{N_{1}%
,\ldots,N_{j}\}\rightarrow\infty\text{, in all }L^{p}$, $1\leq p<\infty$, and
the convergence of the mean follows. Since $\left[  X_{0:i|i}^{j,n}%
;U_{0:i|i}^{j,n}\right]  _{n=1}^{N_{j}}$ are exchangeable, we only need to
consider the convergence of $X_{0:i|i}^{j,1,N_{j}}\rightarrow U_{0:i|i}^{j,1}%
$. We use induction on the LM iteration number $j$. For $j=0$, we have
$X_{0:i|i}^{0,1,N_{0}}=x_{0:i}^{0}=U_{0:i|i}^{0,1}$. For $j\geq1$, we use
induction on time index $i$. For $i=0$, $X_{0|0}^{j,1,N_{j}}=U_{0|0}^{j,1}$.
For $i=1,\ldots,k$, from induction assumption on $j$ and $i$, we have $\bar
{X}_{i-1|k}^{j-1,N_{j}}\rightarrow E(U_{i-1|k}^{j-1,1})$ and $X_{i-1|k}%
^{j,1,N_{j}}\rightarrow U_{i-1|k}^{j,1}$ in all $L^{p}$, $1\leq p<\infty$, as
$\min\{N_{1},\ldots,N_{j}\}\rightarrow\infty$. Convergence in $L^{p}$ implies
convergence in probability, and by the continuous mapping theorem,
\begin{align*}
{X}_{i|i-1}^{j,1,N_{j}}=  &  \mathcal{M}_{i}^{\prime}\left(  \bar{X}%
_{i-1|k}^{j-1,N_{j-1}}\right)  X_{i-1|i-1}^{j,1,N_{j}}+\mathcal{M}_{i}\left(
\bar{X}_{i-1|k}^{j-1,N_{j-1}}\right) \\
&  -\mathcal{M}_{i}^{\prime}\left(  \bar{X}_{i-1|k}^{j-1,N_{j-1}}\right)
\bar{X}_{i-1|k}^{j-1,N_{j-1}}+\mu_{i}+V_{i}^{j,1}\\%
\xrightarrow{\mathrm{P}}%
&  \mathcal{M}_{i}^{\prime}\left(  E\left(  U_{i-1|k}^{j-1,1}\right)  \right)
U_{i-1|i-1}^{j,1}+\mathcal{M}_{i}\left(  E\left(  U_{i-1|k}^{j-1,1}\right)
\right) \\
&  -\mathcal{M}_{i}^{\prime}\left(  E\left(  U_{i-1|k}^{j-1,1}\right)
\right)  E\left(  U_{i-1|k}^{j-1,1}\right)  +\mu_{i}+V_{i}^{j,1}\\
=  &  {U}_{i|i-1}^{j,1}%
\end{align*}
as $\min\{N_{1},\ldots,N_{j}\}\rightarrow\infty$. From Lemma
\ref{lem:boundness_X}, the sequence $\left\{  {X_{0:i|i-1}^{j,1,N_{j}}%
}\right\}  _{N_{j}=1}^{\infty}$ is bounded in all $L^{p}$, $1\leq p<\infty$,
therefore by using the uniform integrability theorem we leverage the
convergence in probability to convergence in all $L^{p}$, hence $X_{0:i|i-1}%
^{j,1,N_{j}}\rightarrow U_{0:i|i-1}^{j,1}$ and $\bar{X}_{0:i|i-1}^{j,N_{j}%
}\rightarrow E\left(  U_{0:i|i-1}^{j,1}\right)  $ in all $L^{p}$. From
\cite{Mandel-2011-CEK} we have $P_{0:i|i-1}^{j,N_{j}}%
\xrightarrow{\mathrm{P}}%
Q_{0:i}^{j}$, then, from the continuous mapping theorem, $K_{i}^{N_{j}%
}\xrightarrow{\mathrm{P}}K_{i}$. From the fact that convergence in $L^{p}$ implies convergence in
probability, and using the continuous mapping theorem again, we conclude that
\begin{align*}
{X}_{0:i|i}^{j,1,N_{j}}  &  ={X}_{0:i|i-1}^{j,1,N_{j}}+K_{i}^{N_{j}}%
\left(\tilde{y}_{i}+\tilde{\mathcal{H}}_{i}^{\prime}\left(  \bar{X}_{i|k}^{j-1,N_{j-1}}\right)
\bar{X}_{i|k}^{j-1,N_{j-1}}-\tilde{\mathcal{H}}_{i}\left(  \bar{X}_{i|k}^{j-1,N_{j-1}%
}\right) \right.\\
&  \quad-\left.\tilde{W}_{i}^{j,1}-\tilde{\mathcal{H}}_{i}^{\prime}\left(  \bar{X}_{i|k}%
^{j-1,N_{j-1}}\right)  X_{i|i-1}^{j,1,N_{j}}\right)\\
&
\xrightarrow{\mathrm{P}}%
{U}_{0:i|i-1}^{j,1}+K_{i}\left(\hat{y}_{i}+\tilde{\mathcal{H}}_{i}^{\prime}\left(  E\left(
{U}_{i|k}^{j-1,1}\right)  \right)  E\left(  {U}_{i|k}^{j-1,1}\right)
-\tilde{\mathcal{H}}_{i}\left(  E\left(  {U}_{i|i}^{j-1,1}\right)  \right) \right. \\
&  \quad-\left. \tilde{W}_{i}^{j,1}-\tilde{\mathcal{H}}_{i}^{\prime}\left(  E\left(  U_{i|k}%
^{j-1,1}\right)  \right)  {U}_{i|i-1}^{j,1}\right)\\
&  ={U}_{0:i|i}^{j,1},
\end{align*}
as $\min\left\{  N_{1},\ldots,N_{j}\right\}  \rightarrow\infty$. Then we
leverage the last convergence to the convergence in $L^{p}$ using Lemma~\ref{lem:boundness_X} and the uniform
integrability again.
\end{proof}

\subsection{EnKS-4DVAR}

\label{sec:EnKS-4DVAR-fd}

To avoid computing with the tangent matrices $\mathcal{M}_{i}^{\prime}\left(
x_{i-1}^{j-1}\right)  $ and \newline$\mathcal{H}_{i}^{\prime}\left(  x_{i}%
^{j-1}\right)  $, we take advantage of the fact that they occur in the EnKS
only in matrix-vector products, and approximate the matrix-vector
multiplications in Algorithm \ref{alg:LM-EnKS-WD} by finite differences with a
small step size $\tau>0$, centered at the previous iterate. Thus, we use the
approximations of the form
\begin{equation}
f^{\prime}\left(  x\right)  y\approx\frac{f\left(  x+\tau y\right)
-f\left(  y\right)  }{\tau} \label{eq:fd}%
\end{equation}
in (\ref{eq:LM-EnKS-WD-adv}), (\ref{eq:LM-EnKS-WD-ana}), and
(\ref{eq:LM-EnKS-WD-h}). Denote by an additional superscript $\tau$ the
quantities computed in the resulting algorithm. This is the EnKS-4DVAR method
originally proposed in \cite{Mandel-2013-4EK}.

\begin{algorithm}
[\textbf{\textsc{{EnKS-4DVAR}}}]\label{alg:LM_EnKS2} Given an initial
approximation $x_{0:k}^{0}$,
 $\gamma>0$, and $\tau>0$. Initialize%
\[
\bar{X}_{0:k|k}^{j,N_{j},\tau}=x_{0:k}^{0}\quad\text{for } j=0.
\]

LM loop: For $j=1,2,\ldots$. Choose $N_j$ the same as in Algorithm~\ref{alg:LM-EnKS-WD}.

EnKS loop: For $i=0$, the ensemble $\left[  X_{0|0}^{j,n,\tau}\right]
_{n=1}^{N_{j}}$ consists of i.i.d. Gaussian random variables
\[
X_{0|0}^{j,n,\tau}\sim N\left(  \tilde{x}_{0}^{0},B\right)  .
\]

For $i=1,\ldots,k$, advance the model in time (the forecast step) by%
\begin{align}
X_{i|i-1}^{j,n,\tau}  &  =\frac{\mathcal{M}_{i}\left(  \bar{X}_{i-1|k}%
^{j-1,N_{j-1},\tau}+\tau\left(  X_{i-1|i-1}^{j,n,\tau}-\bar{X}_{i-1|k}%
^{j-1,N_{j-1},\tau}\right)  \right)  -\mathcal{M}_{i}\left(  \bar{X}%
_{i-1|k}^{j-1,N_{j-1},\tau}\right)  }{\tau}  ,\label{eq:LM-EnKS2-adv}\\
&  +\mathcal{M}_{i}\left(  \bar
{X}_{i-1|k}^{j-1,N_{j-1},\tau}\right) +\mu_{i}+V_{i}^{j,n}\quad V_{i}^{j,n}\sim N\left(  0,{Q}_{i}\right)  ,\quad
n=1,\ldots,N_{j}.\nonumber
\end{align}
Incorporate the observations at time $i$ into the ensemble of composite states
$\left[  X_{0:i|i-1}^{j,n,\tau}\right]  _{n=1}^{N_{j}}$ by the analysis step
\begin{align}
X_{0:i|i}^{j,n,\tau}=  &  X_{0:i|i-1}^{j,n,\tau}+{P}_{0:i|i-1}^{j,N_{j},\tau
}\tilde{H}_{i}^{j,\tau\mathrm{T}}\left(  \tilde{H}_{i}^{j,\tau}{P}%
_{0:i|i-1}^{j,N_{j},\tau}\tilde{H}_{i}^{j,\tau\mathrm{T}}+\tilde{R}_{i}\right)
^{-1}\label{eq:LM-EnKS2-ana}\\
&  \cdot\Bigg(\tilde{y}_{i}-\tilde{W}_{i}^{j,n}-\mathcal{\tilde{H}}_{i}\left(  \bar
{X}_{i|k}^{j-1,N_{j-1},\tau}\right) \nonumber\\
&  \quad-\frac{\mathcal{\tilde{H}}_{i}\left(  \bar{X}_{i|k}^{j-1,N_{j-1},\tau
}+\tau\left(  X_{i|i-1}^{j,n,\tau}-\bar{X}_{i|k}^{j-1,N_{j-1},\tau}\right)
\right)  -\mathcal{\tilde{H}}_{i}\left(  \bar{X}_{i|k}^{j-1,N_{j-1},\tau
}\right)  }{\tau}\Bigg),\nonumber\\
&  \tilde{W}_{i}^{j,n}\sim N\left(  0,\tilde{R}_{i}\right) \nonumber
\end{align}
where $P_{0:i|i-1}^{j,N_{j},\tau}$ is the sample covariance from the ensemble
$\left[  X_{0:i|i-1}^{j,n,\tau}\right]  _{n=1}^{N_{j}}$. Similarly as in
(\ref{eq:PHt})--(\ref{eq:h_n}), only the following matrix-vector products are
needed:
\begin{align}
{P}_{0:i|i-1}^{j,N_{j},\tau}\tilde{H}_{i}^{j,\tau\mathrm{T}}  &  =\frac
{1}{N_{j}-1}\sum_{n=1}^{N_{j}}\left(  X_{0:i|i-1}^{j,n,\tau}-\bar{X}%
_{0:i|i-1}^{j,N_{j},\tau}\right)  \left(  X_{i|i-1}^{j,n,\tau}-\bar{X}%
_{i|i-1}^{j,N_{j},\tau}\right)^{\mathrm{T}}  \tilde{H}_{i}^{j,\tau\mathrm{T}}%
,\label{eq:LM-EnKS2-PHt}\\
&  =\frac{1}{N_{j}-1}\sum_{n=1}^{N_{j}}\left(  X_{0:i|i-1}^{j,n,\tau}-\bar
{X}_{0:i|i-1}^{j,N_{j},\tau}\right)  h_{i}^{j,n,\tau\mathrm{T}}\nonumber\\
\tilde{H}_{i}^{j}{P}_{0:i|i-1}^{j,N_{j},\tau}\tilde{H}_{i}^{j,\tau\mathrm{T}}
&  =\frac{1}{N_{j}-1}\sum_{n=1}^{N_{j}}\tilde{H}_{i}^{j,\tau}\left(
X_{i|i-1}^{j,n,\tau}-\bar{X}_{i|i-1}^{j,N_{j},\tau}\right)  \left(
X_{i|i-1}^{j,n,\tau}-\bar{X}_{i|i-1}^{j,N_{j},\tau}\right)^{\mathrm{T}}  \tilde{H}%
_{i}^{j,\tau\mathrm{T}}\label{eq:LM-EnKS2-HPHt}\\
&  =\frac{1}{N_{j}-1}\sum_{n=1}^{N_{j}}h_{i}^{j,n,\tau}h_{i}^{j,n,\tau
\mathrm{T}},\nonumber
\end{align}
where
\small{
\begin{equation}
h_{i}^{j,n,\tau}=\tilde{H}_{i}^{j,\tau}\left(  X_{i|i-1}^{j,n,\tau}-\bar
{X}_{i|i-1}^{j,N_{j},\tau}\right)  =\frac{\mathcal{\tilde{H}}_{i}\left(
\tau\left(  X_{i|i-1}^{j,n,\tau}-\bar{X}_{i|i-1}^{j,N_{j},\tau}\right)
+\bar{X}_{i|k}^{j-1,N_{j-1},\tau}\right)  -\mathcal{\tilde{H}}_{i}\left(
\bar{X}_{i|k}^{j-1,N_{j-1},\tau}\right)  }{\tau} \label{eq:LM-EnKS2-hn}%
\end{equation}
}
and
\[
\bar{X}_{i|i-1}^{j,N_{j},\tau}=\frac{1}{N_{j}}\sum_{n=1}^{N_{j}}%
X_{i|i-1}^{j,n,\tau},\quad\bar{X}_{0:i|i}^{j,N_{j},\tau}=\frac{1}{N_{j}}%
\sum_{n=1}^{N_{j}}X_{0:i|i}^{j,n,\tau}.
\]

The next LM iterate is $\tilde{x}^{j,\tau}=\bar{X}_{0:k|k}^{j,N_{j},\tau}$.
\end{algorithm}

We now summarize the differences between the previous three
algorithms.\ Algorithm \ref{alg:LM_D} solves the linearized problem in each
iteration exactly, while Algorithm \ref{alg:LM-EnKS-WD} approximates the
solution of the linearized problem by EnKS, and Algorithm \ref{alg:LM_EnKS2}
approximates also the linearized problem itself by finite differences.

We show that when the finite difference parameter $\tau\rightarrow0$, the
iterations of Algorithm \ref{alg:LM_EnKS2} converge to their corresponding
iterations of Algorithm \ref{alg:LM-EnKS-WD} in probability.
The following lemma is the cornerstone of the analysis of the finite
differences here.

\begin{lemma}
\label{lem:fd-conv} Let $\left(  X_{\tau}\right)  $ and $\left(  Y_{\tau
}\right)  $ be random vectors such that $X_{\tau}%
\xrightarrow{\mathrm{P}}%
X$ and $Y_{\tau}%
\xrightarrow{\mathrm{P}}%
Y$ as $\tau\rightarrow0$, $\tau>0$, and $f$ be twice continuously
differentiable with the matrix of second order derivatives $f^{\prime\prime}$
bounded. Then,
\[
\frac{f(X_{\tau}+\tau Y_{\tau})-f(X_{\tau})}{\tau}%
\xrightarrow{\mathrm{P}}%
f^{^{\prime}}(X)Y\text{ as }\tau\rightarrow0\text{, }\tau>0.
\]

\end{lemma}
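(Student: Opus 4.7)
The plan is to proceed by Taylor expansion, separating the finite difference into a main term and a quadratic remainder. By Taylor's theorem with integral remainder (or the standard mean value form), for any realizations $x,y$,
\[
f(x+\tau y)-f(x)=\tau f'(x)y+\tau^{2}\int_{0}^{1}(1-s)\,y^{\mathrm{T}}f''(x+s\tau y)y\,ds,
\]
so after dividing by $\tau$ we obtain the decomposition
\[
\frac{f(X_{\tau}+\tau Y_{\tau})-f(X_{\tau})}{\tau}=f'(X_{\tau})Y_{\tau}+\tau R_{\tau},\qquad |R_{\tau}|\le \tfrac{1}{2}\,\|f''\|_{\infty}\,|Y_{\tau}|^{2},
\]
where $\|f''\|_{\infty}$ is the assumed uniform bound. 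This isolates the issue: the main term should converge to $f'(X)Y$, and the remainder should vanish.

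For the main term, I would first observe that coordinate-wise convergence in probability implies joint convergence, so $(X_{\tau},Y_{\tau})\xrightarrow{\mathrm{P}}(X,Y)$. Then I would apply the continuous mapping theorem to the continuous map $(x,y)\mapsto f'(x)y$ (continuity comes from the assumed continuous differentiability of $f$) to conclude $f'(X_{\tau})Y_{\tau}\xrightarrow{\mathrm{P}}f'(X)Y$ as $\tau\to 0$.

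For the remainder, the point is that $\{Y_{\tau}\}$ is stochastically bounded (tight): since $Y_{\tau}\xrightarrow{\mathrm{P}}Y$, for any $\tilde\varepsilon>0$ there exists $M$ such that $\mathbb{P}(|Y_{\tau}|\le M)\ge 1-\tilde\varepsilon$ for all sufficiently small $\tau$ (via the bound $|Y_{\tau}|\le |Y|+|Y_{\tau}-Y|$ and choosing $M$ so that $\mathbb{P}(|Y|\le M/2)\ge 1-\tilde\varepsilon/2$, then $\tau$ small enough for $\mathbb{P}(|Y_{\tau}-Y|\le M/2)\ge 1-\tilde\varepsilon/2$). Consequently, for any $\varepsilon>0$,
\[
\mathbb{P}\bigl(\tau|R_{\tau}|\ge\varepsilon\bigr)\le \mathbb{P}\bigl(\tfrac{\tau}{2}\|f''\|_{\infty}|Y_{\tau}|^{2}\ge\varepsilon\bigr)\le \tilde\varepsilon
\]
as soon as $\tau$ is small enough that $\tfrac{\tau}{2}\|f''\|_{\infty}M^{2}<\varepsilon$. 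This gives $\tau R_{\tau}\xrightarrow{\mathrm{P}}0$.

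Combining these, the sum of a sequence converging in probability to $f'(X)Y$ and one converging to $0$ converges in probability to $f'(X)Y$, which finishes the argument. The only delicate point I expect is getting the remainder estimate right: one must rely on the uniform bound on $f''$ (rather than mere continuity) so that the $|Y_{\tau}|^{2}$ bound holds regardless of where $X_{\tau}+s\tau Y_{\tau}$ lies, after which the stochastic boundedness of $Y_{\tau}$ does the rest.
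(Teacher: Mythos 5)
Your proposal is correct and follows essentially the same route as the paper's proof: a Taylor expansion giving the remainder bound $Mt|y|^{2}$, stochastic boundedness (tightness) of $\{Y_{\tau}\}$ to kill the remainder, and the continuous mapping theorem applied to $(x,y)\mapsto f'(x)y$ for the main term. Your explicit justification of tightness for small $\tau$ is a slightly more careful rendering of a step the paper states without proof, but the argument is the same.
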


\begin{proof}
From Taylor expansion, for any $x$, $y$, and $t$,%
\begin{equation}
\left\vert \frac{f(x+ty)-f\left(  x\right)  }{t}-f^{^{\prime}}(x)y\right\vert
\leq Mt\left\vert y\right\vert ^{2}, \label{eq:fd-estimate}%
\end{equation}
where $M=\frac{1}{2}\sup_{\xi}\left\vert f^{\prime\prime}\left(  \xi\right)
\right\vert $ in the matrix norm induced by the vector norm $\left\vert
\cdot\right\vert $. Let $\varepsilon>0,$ $\tilde{\varepsilon}>0$. Since
$Y_{\tau}%
\xrightarrow{\mathrm{P}}%
Y$, $\left\{  Y_{\tau}\right\}  $ is uniformly tight, that is, there exists
$K$ such that $\mathbb{P}\left[  \left\vert Y_{\tau}\right\vert \leq K\right]
\geq1-\tilde{\varepsilon}$ for all $\tau>0$. Choose $\tau_{1}=\frac
{\varepsilon}{MK^{2}}>0$. Using (\ref{eq:fd-estimate}), it follows that for
all $0<\tau<\tau_{1}$,
\begin{equation}
\mathbb{P}\left[  \left\vert \frac{f(X_{\tau}+\tau Y_{\tau})-f(X_{\tau})}%
{\tau}-f^{^{\prime}}(X_{\tau})Y_{\tau}\right\vert \leq\varepsilon\right]
\geq1-\tilde{\varepsilon}. \label{eq:fd-prob-est}%
\end{equation}
Since the mapping $\left(  x,y\right)  \mapsto f^{\prime}\left(  x\right)  y$
is continuous and $\left(  X_{\tau},Y_{\tau}\right)  \rightarrow\left(
X,Y\right)  $ in probability, it follows from the continuous mapping theorem
that $f^{^{\prime}}(X_{\tau})Y_{\tau}\rightarrow f^{^{\prime}}(X)Y$ in
probability, hence there exists $\tau_{2}$ such that for all $\tau<\tau_{2}$,
\begin{equation}
\mathbb{P}\left[  \left\vert f^{^{\prime}}(X_{\tau})Y_{\tau}-f^{^{\prime}%
}(X)Y\right\vert \leq\varepsilon\right]  \geq1-\tilde{\varepsilon}.
\label{eq:incr-prob-est}%
\end{equation}
Finally, using the triangle inequality, (\ref{eq:fd-prob-est})\ and
(\ref{eq:incr-prob-est})\ imply
\[
\mathbb{P}\left[  \left\vert \frac{f(X_{\tau}+\tau Y_{\tau})-f(X_{\tau})}%
{\tau}-f^{^{\prime}}(X)Y\right\vert \leq2\varepsilon\right]  \geq
1-2\tilde{\varepsilon},
\]
for all $0<\tau<\min\left\{  \tau_{1},\tau_{2}\right\}  $.
\end{proof}

\begin{theorem}
\label{thm:conv-tau} At each iteration $j$ and time step $i$ of Algorithm
\ref{alg:LM_EnKS2}, $X_{0:i|i}^{j,n,\tau}%
\xrightarrow{\mathrm{P}}%
X_{0:i|i}^{j,n}$ as $\tau\rightarrow0$, where $X_{0:i|i}^{j,n}$ is the $n$-th
member of the ensemble generated at $j$-th iteration in Algorithm
\ref{alg:LM-EnKS-WD} with the same random perturbations as
in Algorithm \ref{alg:LM_EnKS2}.
\end{theorem}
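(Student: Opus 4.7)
The plan is a double induction: outer on the Levenberg--Marquardt iteration $j$, inner on the time index $i$. The base case $j=0$ is trivial since $X_{0:i|i}^{0,n,\tau} = x_{0:i}^{0} = X_{0:i|i}^{0,n}$. Assuming the claim at iteration $j-1$, averaging over $n$ and applying the continuous mapping theorem gives $\bar{X}_{i|k}^{j-1,N_{j-1},\tau} \xrightarrow{\mathrm{P}} \bar{X}_{i|k}^{j-1,N_{j-1}}$ as $\tau\to 0$ for every $i$; these are the linearization points used throughout iteration $j$.

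For the inner induction with $j$ fixed, the base case $i=0$ holds because the initial ensemble uses the same draws, $X_{0|0}^{j,n,\tau} = X_{0|0}^{j,n}$. At the inductive step I first treat the forecast step (\ref{eq:LM-EnKS2-adv}). Compared with (\ref{eq:LM-EnKS-WD-adv}), the only new object is the finite-difference quotient approximating $\mathcal{M}_i'(\bar{X}_{i-1|k}^{j-1,N_{j-1}})(X_{i-1|i-1}^{j,n} - \bar{X}_{i-1|k}^{j-1,N_{j-1}})$. I would invoke Lemma~\ref{lem:fd-conv} with $X_\tau = \bar{X}_{i-1|k}^{j-1,N_{j-1},\tau}$ and $Y_\tau = X_{i-1|i-1}^{j,n,\tau} - \bar{X}_{i-1|k}^{j-1,N_{j-1},\tau}$, both of which converge in probability by the outer and inner induction hypotheses. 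Combined with the continuous mapping theorem for the remaining (continuous) terms, this yields $X_{i|i-1}^{j,n,\tau} \xrightarrow{\mathrm{P}} X_{i|i-1}^{j,n}$, and averaging preserves the convergence so that $\bar{X}_{i|i-1}^{j,N_j,\tau} \xrightarrow{\mathrm{P}} \bar{X}_{i|i-1}^{j,N_j}$.

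For the analysis step (\ref{eq:LM-EnKS2-ana}) the same strategy applies, but Lemma~\ref{lem:fd-conv} must be invoked for every finite-difference occurrence: once in each term $h_i^{j,n,\tau}$ of the definition of the sample covariance, giving convergence of both $P_{0:i|i-1}^{j,N_j,\tau}\tilde{H}_i^{j,\tau\mathrm{T}}$ and $\tilde{H}_i^{j,\tau}P_{0:i|i-1}^{j,N_j,\tau}\tilde{H}_i^{j,\tau\mathrm{T}}$ in probability; and once more for the $\mathcal{\tilde{H}}_i$-difference inside the innovation bracket. Positive semi-definiteness of the sample covariance and positive definiteness of $\tilde{R}_i$ keep the argument of $(\tilde{H}_i^{j,\tau}P_{0:i|i-1}^{j,N_j,\tau}\tilde{H}_i^{j,\tau\mathrm{T}} + \tilde{R}_i)^{-1}$ in the open set of invertible matrices, so the continuous mapping theorem applies to all the sums, products and the inverse assembling the Kalman gain and the update, delivering $X_{0:i|i}^{j,n,\tau} \xrightarrow{\mathrm{P}} X_{0:i|i}^{j,n}$ and closing both inductions.

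The main obstacle, and the step I expect to require the most care, is verifying the hypothesis of Lemma~\ref{lem:fd-conv}, which requires the Hessian of the underlying map to be bounded; Assumption~\ref{ass:M_H} only provides $C^1$-regularity with polynomial growth. I would either strengthen the smoothness hypothesis to $C^2$ with bounded (or locally bounded) Hessians of $\mathcal{M}_i$ and $\mathcal{H}_i$, or truncate to the event $\{|X_\tau|, |Y_\tau|\le R\}$: on that event the Taylor remainder used in the proof of Lemma~\ref{lem:fd-conv} is controlled by a local sup of the Hessian, and the uniform tightness of $(X_\tau,Y_\tau)$ inherited from the $L^p$-bounds of Lemma~\ref{lem:boundness_X} (applied to both the finite-difference and the exact-tangent ensembles) makes the complementary event arbitrarily improbable. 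The remaining bookkeeping is routine application of the continuous mapping theorem.
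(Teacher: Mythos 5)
Your proof follows essentially the same route as the paper's: a double induction on the LM iteration $j$ and on the time index $i$, with Lemma~\ref{lem:fd-conv} applied to each finite-difference quotient and the continuous mapping theorem handling the remaining algebra of the forecast and analysis steps. The one point where you go beyond the paper is your observation that Lemma~\ref{lem:fd-conv} assumes $f\in C^{2}$ with bounded Hessian while Assumption~\ref{ass:M_H} only guarantees $C^{1}$ operators with polynomial growth --- the paper invokes the lemma for $\mathcal{M}_{i}$ and $\mathcal{H}_{i}$ without addressing this mismatch, so your proposed truncation-plus-tightness repair (or a strengthened smoothness hypothesis) patches a genuine gap in the paper's own argument rather than one in yours.
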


\begin{proof}
In  this proof we omit the subscripts of $N_j$ and $N_{j-1}$. 
The proof is by induction on the number of iterations $j$. For $j=1$ we have
$\bar{X}_{0:i|i}^{j-1,N,\tau}=\bar{X}_{0:i|i}^{j-1,N}$. For
 $j\geq2$, we use induction on time step $i$. For $i=0$ we have
$X_{0|0}^{j,n,\tau}=x_{\mathrm{b}}+V_{b}^{n}=X_{0|0}^{j,n}$. For
$i=1,\ldots,k$, we have from the induction assumption on $i$, $X_{i-1|i-1}%
^{j,n,\tau}%
\xrightarrow{\mathrm{P}}%
X_{i-1|i-1}^{j,n}$ as $\tau\rightarrow0$. Then using Lemma \ref{lem:fd-conv},
we have in (\ref{eq:LM-EnKS2-adv}) as $\tau\rightarrow0$,
\begin{align}
X_{i|i-1}^{j,n,\tau}=  &  \frac{\mathcal{M}_{i}\left(  \bar{X}_{i-1|k}%
^{j-1,N_{},\tau}+\tau\left(  X_{i-1|i-1}^{j,n,\tau}-\bar{X}_{i-1|k}%
^{j-1,N_{},\tau}\right)  \right)  -\mathcal{M}_{i}\left(  \bar{X}%
_{i-1|k}^{j-1,N_{},\tau}\right)  }{\tau}\label{eq:LM-EnKS2-ana-conv}\\
&  +\mathcal{M}_{i}\left(  \bar{X}_{i-1|k}^{j-1,N_{},\tau}\right)  +\mu
_{i}\nonumber\\%
\xrightarrow{\mathrm{P}}%
&  \mathcal{M}_{i}^{\prime}\left(  \bar{X}_{i-1|k}^{j-1,N_{}}\right)  \left(
X_{i-1|i-1}^{j,n}-\bar{X}_{i-1|k}^{j-1,N_{}}\right)  +\mathcal{M}_{i}\left(
\bar{X}_{i-1}^{j-1,N}\right)  +\mu_{i}+V_{i}^{n}=X_{i|i-1}^{j,n}.
\end{align}
Similarly, using the induction assumption on $j$ and Lemma \ref{lem:fd-conv},
we have in (\ref{eq:LM-EnKS2-ana}) and in (\ref{eq:LM-EnKS2-hn}),
respectively,%
\begin{align*}
&  \frac{\mathcal{H}_{i}\left(  \bar{X}_{i|k}^{j-1,N,\tau}+\tau\left(
X_{i|i-1}^{j,n,\tau}-\bar{X}_{i|k}^{j-1,N_{},\tau}\right)  \right)
-\mathcal{H}_{i}\left(  \bar{X}_{i|k}^{j-1,N_{},\tau}\right)  }{\tau}\\
&  \quad%
\xrightarrow{\mathrm{P}}%
\mathcal{H}_{i}^{\prime}\left(  \bar{X}_{i|i}^{j-1,N}\right)\left(  X_{i|i-1}%
^{j,n,\tau}-\bar{X}_{i|k}^{j-1,N_{},\tau}\right),\\
&  \frac{\mathcal{H}_{i}\left(  \bar{X}_{i|k}^{j-1,N,\tau}+\tau\left(
X_{i|i-1}^{j,n,\tau}-\bar{X}_{i|i-1}^{j-1,N_{},\tau}\right)  \right)
-\mathcal{H}_{i}\left(  \bar{X}_{i|k}^{j-1,N_{},\tau}\right)  }{\tau}\\
&  \quad%
\xrightarrow{\mathrm{P}}%
\mathcal{H}_{i}^{\prime}\left(  \bar{X}_{i|k}^{j-1,N_{}}\right)
\left(X_{i|i-1}^{j,n,\tau}-\bar{X}_{i|-1}^{j-1,N_{},\tau}\right).%
\end{align*}
as $\tau\rightarrow0$. In (\ref{eq:h_n}) gives
\begin{equation}
h_{i}^{j,n,\tau}%
\xrightarrow{\mathrm{P}}%
h_{i}^{j,n}\text{ as }\tau\rightarrow0. \label{eq:LM-EnKS2-hn-conv}%
\end{equation}
Using (\ref{eq:LM-EnKS2-hn-conv}) and the continuous mapping theorem in
(\ref{eq:LM-EnKS2-HPHt}) and (\ref{eq:LM-EnKS2-PHt}) gives
\begin{align*}
{P}_{0:i|i-1}^{j,N_{},\tau}\tilde{H}_{i}^{j,\tau\mathrm{T}}%
\xrightarrow{\mathrm{P}}%
{P}_{0:i|i-1}^{j,N_{}}\tilde{H}_{i}^{j\mathrm{T}}\text{\quad as
}\tau &  \rightarrow0,\\
\tilde{H}_{i}^{j,\tau}{P}_{0:i|i-1}^{j,N_{},\tau}\tilde{H}_{i}^{j,\tau\mathrm{T}}%
\xrightarrow{\mathrm{P}}%
\tilde{H}_{i}^{j}{P}_{0:i|i-1}^{j,N_{}}\tilde{H}_{i}^{j\mathrm{T}%
}\text{\quad as }\tau &  \rightarrow0.
\end{align*}
Using also (\ref{eq:LM-EnKS2-ana-conv}) in (\ref{eq:LM-EnKS2-ana}) and the
continuous mapping theorem once more gives $X_{0:i|i}^{j,n,\tau}%
\xrightarrow{\mathrm{P}}%
X_{0:i|i}^{j,n}$ as $\tau\rightarrow0$.
\end{proof}

\begin{corollary}
\label{cor:composite-limit} For each $j$, $\lim_{\min\{N_{1},\ldots
,N_{j}\}\rightarrow\infty}\lim_{\tau\rightarrow0}\bar{X}_{0:k|k}^{j,N,\tau
}=x^{j}$ in probability, where $x^{j}$ is the $j$-th iterate of Algorithm
\ref{alg:LM_D}.
\end{corollary}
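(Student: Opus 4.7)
The plan is to stack the two convergence theorems already established, taking the inner limit $\tau\to 0$ first with all ensemble sizes $N_1,\ldots,N_j$ held fixed, and then the outer limit $\min\{N_1,\ldots,N_j\}\to\infty$, and finally to identify the resulting deterministic limit with the LM iterate $x^{j}$ via Lemma \ref{lem:U_eq_min_r}. Since the statement is phrased as an iterated (not joint) limit, no delicate diagonal argument is needed; the main content is simply to check that each limit is justified and that the continuous mapping theorem lets us pass through the sample mean.

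First I would fix an arbitrary ensemble size vector $(N_1,\ldots,N_j)$ and apply Theorem \ref{thm:conv-tau} to obtain $X_{0:k|k}^{j,n,\tau}\xrightarrow{\mathrm{P}} X_{0:k|k}^{j,n}$ as $\tau\to 0$ for every $n=1,\ldots,N_j$. Since
\[
\bar{X}_{0:k|k}^{j,N,\tau}=\frac{1}{N_{j}}\sum_{n=1}^{N_{j}}X_{0:k|k}^{j,n,\tau}
\]
is a continuous (in fact linear) function of a \emph{finite} collection of random vectors, the continuous mapping theorem gives
\[
\bar{X}_{0:k|k}^{j,N,\tau}\xrightarrow{\mathrm{P}}\bar{X}_{0:k|k}^{j,N}\quad\text{as }\tau\to 0,
\]
so the inner limit equals $\bar{X}_{0:k|k}^{j,N}$, the sample mean produced by Algorithm \ref{alg:LM-EnKS-WD} with the same random perturbations.

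Next I would let $\min\{N_1,\ldots,N_j\}\to\infty$ and invoke Theorem \ref{thm:conv-N}, which asserts $\bar{X}_{0:k|k}^{j,N_{j}}\to E(U_{0:k|k}^{j,1})$ in every $L^{p}$, $1\le p<\infty$; in particular, the convergence holds in probability. By Lemma \ref{lem:U_eq_min_r}, $E(U_{0:k|k}^{j,1})=x^{j}$, the $j$-th iterate of Algorithm \ref{alg:LM_D}. Composing the two limits,
\[
\lim_{\min\{N_{1},\ldots,N_{j}\}\to\infty}\lim_{\tau\to 0}\bar{X}_{0:k|k}^{j,N,\tau}=\lim_{\min\{N_{1},\ldots,N_{j}\}\to\infty}\bar{X}_{0:k|k}^{j,N_{j}}=x^{j}
\]
in probability, which is the claim.

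The only mild obstacle is making sure the inner limit is interpreted correctly: Theorem \ref{thm:conv-tau} is stated component-by-component in $n$, while we need convergence of the average. But because $N_{j}$ is a fixed finite integer for the inner limit, the continuous mapping theorem (applied to the projection onto $\mathbf{R}^{m(k+1)N_{j}}$ followed by averaging) handles this immediately, and no uniformity in $N_{j}$ as $\tau\to 0$ is required. All the heavy lifting (the $L^{p}$ bounds of Lemma \ref{lem:boundness_X}, uniform integrability, and the exchangeability propagation) has already been absorbed into Theorems \ref{thm:conv-N} and \ref{thm:conv-tau}, so the corollary is genuinely a short consequence of those two results together with Lemma \ref{lem:U_eq_min_r}.
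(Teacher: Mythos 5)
Your proof is correct and follows exactly the route the paper intends: it combines Theorem \ref{thm:conv-tau} (inner limit in $\tau$), Theorem \ref{thm:conv-N} (outer limit in the ensemble sizes), and Lemma \ref{lem:U_eq_min_r} to identify the limit with $x^{j}$, which is precisely the paper's one-line proof spelled out. The only detail you add beyond the paper --- passing from componentwise convergence in probability to convergence of the finite sample mean --- is handled correctly and needs no uniformity in $N_{j}$.
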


\begin{proof}
The proof follows immediately from Theorem \ref{thm:conv-tau}, Theorem
\ref{thm:conv-N}, and Lemma \ref{lem:U_eq_min_r}.
\end{proof}

\section{Conclusion}

In this paper we have shown that: when the observation and the model operators are linear for any time step,
the empirical mean and covariance of EnKS converge
 to the KS mean and covariance in the limit for large ensemble size  in $L^{p}$ for any
 $p \in [1,\infty)$.  In the nonlinear case, i.e., in the case where the observation and the model operators are not necessary linear, we have shown the convergence
of LM-EnKS iterations (Algorithm~\ref{alg:LM_EnKS2}) in the limit for large ensemble size. The convergence is in the sense that (i) each
 iterate generated by Algorithm~\ref{alg:LM_EnKS2} converges in probability to its corresponding iterate
 of Algorithm~\ref{alg:LM-EnKS-WD} as the finite differences parameter goes to zero, (ii) and that each iterate generated by
 Algorithm~ \ref{alg:LM-EnKS-WD} converges, in $L^{p}$ for any
 $p \in [1,\infty)$, to its corresponding iterate of Algorithm~\ref{alg:LM_D} (the Levenberg-Marquardt algorithm) in the large-ensemble limit.

 These proofs of convergence, and more generally the asymptotic behavior of
 the ensemble-based algorithms  deserve further investigation. Here in the nonlinear case,
 we have given only the limit in probability of each iterate of  Algorithm~\ref{alg:LM_EnKS2} as the finite differences parameter goes to zero
 and the ensemble sizes go to infinity.
  One may, for instance, try to prove stronger convergence results, especially
  to leverage the convergences in probability to convergences in $L^{p}$,  and show the convergence rate of these algorithms following the spirit of \cite{LeGland-2011-LSA}.
The approach followed in this paper could be also extended to the case in which  other variants of ensemble method, such as the square root ensemble Kalman filter \cite{Kwiatkowski-2014-CSR},
  are used to approximately solve the linearized subproblem. 

\bibliographystyle{siam}
\bibliography{../../references/geo,../../references/other}

\end{document}